\newtheorem{theorem}{Theorem}
\newtheorem{proposition}{Proposition}
\newtheorem{lemma}{Lemma}
\newtheorem{remark}{Remark}
\newtheorem{corollary}{Corollary}
\newtheorem{definition}{Definition}
\newcommand{\hilb}{\mathcal{H}}
\newcommand{\B}{\mathcal{B}}
\newcommand{\C}{\mathbb{C}}
\newcommand{\R}{\mathbb{R}}
\newcommand{\mob}{{M\"{o}bius}}
\newcommand{\A}{\mathcal{A}}
\newcommand{\Z}{\mathbb{Z}}
\newcommand{\id}{\operatorname{id}}
\title{ {\huge The $\alpha$-induction of Graded Local Conformal Nets}}
\author{{\sc ZIYUN XU}\\{\small Graduate School of Mathematical Sciences}\\{\small The University of Tokyo, Komaba, Tokyo, 153-8914, Japan}\\{\small e-mail: {\tt zyxu0805@g.ecc.u-tokyo.ac.jp}}}
\begin{document}
\maketitle
\begin{abstract}
    The $\alpha$-induction of graded local conformal nets is studied. 
    We show that inclusions of graded local conformal nets give rise to braided subfactors so that the $\alpha$-induction is still effective for graded local conformal nets. As an application, we give a shorter proof of classification of $N=2$ superconformal nets in the discrete series.
\end{abstract}
\section{Introduction}

The subfactor theory initiated by Jones \cite{jones} has opened many surprising connections of operator algebras to various branches of mathematics and physics, such as
$3$-dimensional topology, quantum groups and statistical mechanics. Subfactor theory is a central aspect of the
representation theory of conformal nets \cite{fg}, \cite{wass}. Methods of subfactor theory have turned out to be powerful in the operator algebraic approach to conformal field theory. For instance, $\alpha$-induction was first introduced in \cite{lr} and studied in \cite{xu}, with a more general setting studied in \cite{be1}, \cite{be2}, \cite{be3}. In \cite{bek}, $\alpha$-induction was defined and studied in a fully general setting. By applying $\alpha$-induction to certain subfactors \cite{kl}, the authors gave a classification of conformal nets and two dimensional full conformal nets \cite{kl2} in the discrete series, respectively. \\
On the other hand, the AQFT (Algebraic Quantum Field Theory) method is still useful for superconformal field theories. The $N=1$ superconformal nets were first introduced and studied in \cite{ckl} and $N=2$ superconformal nets were introduced and studied in \cite{chklx}. In \cite{chklx}, the authors studied the $\alpha$-induction of the Bose subnet of a given $N=2$ superconformal net. By analyzing fermionization of the inclusion of Bose subnets, the authors gave a classification of $N=2$ superconformal nets in the discrete series.\\
In this paper, we first study the graded localized endomorphisms of a given graded local conformal net $\B$. As a main result, we show that the category of graded localized endomorphisms of $\B$ is a braided tensor category which allows the $\alpha$-induction to proceed. In addition, the simple extension procedure can be naturally generalized to graded local conformal nets. Finally, by directly applying the $\alpha$-induction to a given $N=2$ superconformal net in the discrete series, we give a shorter classification of $N=2$ superconformal nets in the discrete series according to Gannon's list \cite{gannonlist}. Our classification follows the strategy of \cite{chklx}. Given an $N=2$ superconformal net $\A$ in the discrete series, we have a net of subfactors $\A_c(I)\subset\A(I)$, where $\A_c$ is the $N=2$ super-Virasoro net. From one of those subfactors, we obtain a modular invariant by applying the $\alpha$-induction. On the other hand, given a modular invariant, we can construct the dual canonical endomorphism as $\theta=\bigoplus_{\lambda} Z_{0,\lambda}\lambda$. Then we construct a subfactor whose dual canonical endomorphism is $\theta$. Finally, we show that the above relation between $N=2$ superconformal nets in the discrete series and modular invariants is a one-to-one correspondence.

\section{Preliminaries}
\subsection{Braided subfactors and $\alpha$-induction}
  Let $N, M$ be infinite factors. We denote by Mor$(N,M)$ the set of unital $*$-homomorphisms from $N$ to $M$. The dimension of $\lambda\in$Mor$(N,M)$ is defined as $d_{\lambda} \coloneqq [M:\lambda(N)]^{\frac{1}{2}}$, where $[M:\lambda(N)]$ is the minimal index \cite{jones}, \cite{kosaki}. A morphism
$\lambda\in$Mor$(N,M)$ is called irreducible if $\lambda(N)\subset M$ is irreducible, i.e. $\lambda(N)'\cap M = \C$. Two morphisms $\lambda, \lambda'\in$Mor$(N,M)$ are called unitary equivalent if there is a unitary $u\in M$ such that $\lambda' = u^*\lambda u$. The unitary equivalence class $[\lambda]$ of a morphism $\lambda\in$Mor$(N,M)$ is called an $M$-$N$ sector. For sectors, we have a notion of direct sums, products and conjugates (see \cite{bek} and the references therein for more details). For $\lambda,\mu\in $Mor$(N,M)$ we denote 
Hom$(\lambda,\mu)=\{m\in M: m\lambda(n)=\mu(n)m, n\in N\}$
and$\langle\lambda,\mu\rangle=\operatorname{dim}\operatorname{Hom}(\lambda,\mu)$. Let $N$ be a type III factor equipped with a system $\Delta\subset$Mor$(N,N)$ of endomorphisms in the sense of (\cite{bek}, definition 2.1). This means morphisms in $\Delta$ are irreducible and have finite statistical dimensions and they are different as sectors. In addition, they form a closed fusion algebra. Then $\Sigma(\Delta)\subset$Mor$(N,N)$ denotes the set of morphisms which decompose as sectors into finite sums of elements in $\Delta$. We assume that $\Delta$ is braided in the sense of \cite{bek} and we extend the braiding to $\Sigma(\Delta)$. We then consider a braided subfactor $N\subset M$, i.e. they are both type III factors and that the dual canonical endomorphism sector $[\theta]$ decomposes in a finite sum of sectors of morphisms in $\Delta$, i.e. $\theta\in\Sigma(\Delta)$. Here $\theta = \bar{\iota}\iota$ with
$\iota: N \to M$ being the injection homomorphism and $\Bar{\iota}\in$Mor$(M,N)$ being a conjuagte morphism. Note that this forces the statistical dimension of $\theta$ and thus the index of the subfactor $N\subset M$ to be finite. Then we can define $\alpha$-induction by
    \[
    \alpha^{\pm}_{\lambda} = \Bar{\iota}^{-1}\circ\operatorname{Ad}(\epsilon(\lambda,\theta)^{\pm})\circ\lambda\circ\Bar{\iota}
    \]
for $\lambda\in\Sigma(\Delta)$. Then $\alpha^{\pm}_{\lambda} \in $Mor$(M,M)$ and satisfy $\alpha^{\pm}_{\lambda}|_N = \lambda$.

The ambichiral system ${}^{}_M \mathcal{X}^0_M\subset$Mor$(M,M) $ is the subset corresponding to subsectors of $[\alpha^+_{\lambda}]$ and $[\alpha^-_{\mu}]$ when $\lambda$ and $\mu$ vary in $\Delta$. In \cite{be3} section $3$, a relative braiding between representative endomorphisms was introduced. Namely, if $\omega,\nu\in$Mor$(M,M)$ are such that $[\omega]$ and $[\nu]$ are subsectors of $[\alpha_{\sigma}^+]$ and $[\alpha_{\rho}^-]$ respectively, for some $\sigma,\rho \in \Sigma(\Delta)$, then 

\[
\mathcal{E}_r^{\pm}(\omega,\nu) = s^*\alpha_{\rho}^-(t^*)\epsilon^{\pm}(\sigma,\rho)\alpha_{\sigma}^+(s)t \in \text{Hom}(\omega\nu,\nu\omega)
\]
is unitary where $t\in\text{Hom}(\omega,\alpha_{\sigma}^+), s\in\text{Hom}(\nu,\alpha_{\rho}^-)$ are isometries. It was shown that $\mathcal{E}_r^{\pm}(\omega,\nu)$ does not depend on $\sigma,\rho\in \Sigma(\Delta)$ and not on the isometries $s,t$,
in the sense that if $\lambda, \gamma \in \Sigma(\Delta)$ such that 
$[\alpha_{\lambda}^+]=[\alpha_{\sigma}^+]$, $[\alpha_{\gamma}^-]=[\alpha_{\rho}^-]$, and $a\in\text{Hom}(\omega,\alpha_{\lambda}^+), b\in\text{Hom}(\nu,\alpha_{\gamma}^-)$ are isometries, then 
\[
\mathcal{E}_r^{\pm}(\omega,\nu) = b^*\alpha_{\gamma}^-(a^*)\epsilon^{\pm}(\lambda,\gamma)\alpha_{\lambda}^+(b)a.
\]
It was shown in \cite{be3} proposition $3.12$ that the family of unitary relative braiding operators 
satisfies the following properties:
\begin{enumerate}[(A)]
    \item $\mathcal{E}_r^{\pm}(\text{id},\omega) = \mathcal{E}_r^{\pm}(\nu,\text{id})= 1$, for $\omega, \nu \in {}^{}_M \mathcal{X}^0_M$.
    \item (Composition rules): 
    \begin{align*}
\mathcal{E}_r^{\pm}(\omega\nu,\kappa) &= \mathcal{E}_r^{\pm}(\omega,\kappa)\omega(\mathcal{E}_r^{\pm}(\nu,\kappa)),\\
\mathcal{E}_r^{\pm}(\omega,\nu\kappa) &= \nu(\mathcal{E}_r^{\pm}(\omega,\kappa))\mathcal{E}_r^{\pm}(\omega,\nu),
    \end{align*}
    where $\omega,\nu,\kappa\in{}^{}_M \mathcal{X}^0_M $.
    
    \item (Naturality): Let $\omega,\omega',\nu,\nu'\in{}^{}_M \mathcal{X}^0_M$. If $a\in\text{Hom}(\omega,\omega')$ and $b\in\text{Hom}(\nu,\nu')$, then:
    \begin{align*}
        \nu(a)\mathcal{E}_r^{\pm}(\omega,\nu) &= \mathcal{E}_r^{\pm}(\omega',\nu)a, \\
        b\mathcal{E}_r^{\pm}(\omega,\nu) &= \mathcal{E}_r^{\pm}(\omega,\nu')\omega(b).
    \end{align*}
\end{enumerate}
Let $\kappa,\omega,\nu,\rho \in{}^{}_M \mathcal{X}^0_M $ and $f\in\text{Hom}(\kappa\omega, \nu)$,
As a result of (B) and (C), 
we have the following braiding fusion relations\label{bfe}:
\begin{align}
        \rho(f)\mathcal{E}_r^{\pm}(\kappa,\rho)\kappa(\mathcal{E}_r^{\pm}(\omega,\rho)) &= \mathcal{E}_r^\pm(\nu,\rho)f. \\
        f\kappa(\mathcal{E}_r^\pm(\rho,\omega))\mathcal{E}_r^\pm(\rho,\kappa) &= \mathcal{E}_r^\pm(\rho,\nu)\rho(f).
    \end{align}

\subsection{$Q$-systems and classification}
Let $M$ be an infinite factor. A $Q$-system \cite{longo} is a triple of an endomorphism of $M$ and isometries 
$v\in\operatorname{Hom}(\id,\gamma), w\in\operatorname{Hom}(\gamma,\gamma^2)$ satisfying the following identities:
\begin{align*}
    v^*w=\gamma(v^*)w \in\R_{>0},
    \gamma(w)w=w^2.
\end{align*}
If $N\subset M$ is a finite index subfactor, the associated canonical endomorphism gives rise to a $Q$-system.
Conversely, any $Q$-system determines a subfactor $N\subset M$ such that $\gamma$ is the canonical endomorphism:
$N$ is given by $N=\{x\in M: wx=\gamma(x)w\}$.

\subsection{Graded Local Conformal Nets}
The concept of graded local conformal nets is a generalization of the notion of local conformal nets and has been explicitly introduced and studied in \cite{ckl}.\\
Let $S^1=\{z\in\C: |z|=1\}$ be the unit circle. Let 
$\operatorname{Diff}(S^1)$ be the infinite dimensional real Lie group of orientation-preserving smooth diffeomorphisms of $S^1$ and denote by $\operatorname{Diff}(S^1)^{(n)}, n\in\mathbb{N}\cup\{\infty\}$ the corresponding $n$-cover. In particular, $\operatorname{Diff}(S^1)^{(\infty)}$ is the universal covering group of $\operatorname{Diff}(S^1)$. By identifying $\operatorname{PSL}(2,\R)$ with the group of \mob \ transformations on $S^1$ we can consider it as a three dimensional Lie subgroup of $\operatorname{Diff}(S^1)$. We denote by $\operatorname{PSL}(2,\R)^{(n)}\subset\operatorname{Diff}(S^1)^{(n)}, n\in\mathbb{N}\cup\{\infty\}$ the corresponding $n$-cover so that 
$\operatorname{PSL}(2,\R)^{(\infty)}$ is the universal covering group of $\operatorname{PSL}(2,\R)$. We denote by $\dot{g}\in\operatorname{Diff}(S^1)$ the image of $g\in\operatorname{Diff}(S^1)^{(\infty)}$ under the covering map.\\
Let $\mathcal{I}$ denote the set of all connected non-dense open intervals in $S^1$. For $I\in\mathcal{I}$, $I'$ denotes the interior of $S^1\backslash I$. Given $I\in\mathcal{I}$, the subgroup $\operatorname{Diff}(S^1)_I$of diffeomorphisms localized in $I$ is defined as the stabilizer of $I$ in $\operatorname{Diff}(S^1)$, namely the subgroup of $\operatorname{Diff}(S^1)$
whose elements are the diffeomorphisms acting trivially on $I'$.
\begin{definition}
    A graded local conformal net $\A$ on $S^1$ is a map $I\mapsto\A(I)$ from the set of intervals(connected, non dense) $\mathcal{I}$ to the set of von Neumann algebras acting on a common infinite dimensional separable Hilbert space $\hilb$ which satisfies the following properties:
    \begin{enumerate}
        \item Isotony. $\A(I)\subset\A(J)$ if $I, J\in\mathcal{I}$ and $I\subset J$.
        \item \mob \ covariance. There is a strongly continuous unitary representation $U$ of $\operatorname{PSL}(2,\R)^{(\infty)}$ such that 
        \[
        U_g\A(I)U^*_g = \A(\Dot{g}I), g\in \operatorname{PSL}(2,\R)^{(\infty)}, I \in \mathcal{I}.
        \]
        \item Positive energy. The conformal Hamiltonian $L_0$ is positive.
        \item Existence and uniqueness of the vacuum. There exists a $U$-invariant vector $\Omega\in\hilb$ which is unique up to a phase and cyclic for $\bigvee_{I\in\mathcal{I}}\A(I)$, the von Neumann algebra generated by the algebras $\A(I), I \in \mathcal{I}$.
        \item Graded locality. There exists a self adjoint unitary(the grading unitary) $\Gamma$ on $\hilb$ satisfying $\Gamma\A(I)\Gamma=\A(I)$ for all $I\in\mathcal{I}$ and $\Gamma\Omega=\Omega$ and such that $\A(I')\subset Z\A(I)'Z^*, \ I\in\mathcal{I}$, where $Z\coloneqq\frac{1-i\Gamma}{1-i}$.
        \item Diffeomorphism covariance. There is a strongly continuous projective unitary representation of $\operatorname{Diff}(S^1)^{(\infty)}$, denoted again by $U$, extending the unitary representation of $\operatorname{PSL}(2,\R)^{(\infty)}$ and such that 
        \[
         U_g\A(I)U^*_g = \A(\Dot{g}I), \ g\in \operatorname{Diff}(S^1)^{(\infty)}, \ I \in \mathcal{I},
        \]
        and such that 
        \[
         U_gxU^*_g = x, x\in\A(I'),\  g\in \operatorname{Diff}(S^1)^{(\infty)}_I,\  I \in \mathcal{I}.
        \]
    \end{enumerate}
    
\end{definition}
A local conformal net is a graded local conformal net with trivial gauge unitary $\Gamma=1$. The Bose subnet of a graded local conformal net $\A$ is defined as the fixed point subnet $\A^{\mathfrak{F}}$, where $\mathfrak{F} = \operatorname{Ad}\Gamma$ is called the Fermi grading. The restriction of $\A^{\mathfrak{F}}$ to the Bose subspace of $\hilb$ is a local conformal net with respect to the restriction to this subspace of the projective representation $U$ of $\operatorname{Diff}(S^1)^{(\infty)}$.

Some consequences \cite{ckl},\cite{fg}, \cite{gl}, \cite{cw} of the preceding definition are:
\begin{enumerate}
    \item Reeh-Schlieder Property. $\Omega$ is cyclic and separating for every $\A(I), I\in\mathcal{I}$.
    \item Bisognano-Wichmann Property. Let $I\in\mathcal{I}$ and let $\Delta_I, J_I$ be the modular operator and the modular conjugation of $(\A(I), \Omega)$. Then we have
    \[
    U(\delta_I(-2\pi t)) = \Delta_I^{it}, t\in\R.
    \]
    Moreover, the unitary representation $U:\operatorname{PSL}(2,\R)^{(\infty)}\to B(\hilb)$ extends to an (anti-)unitary representation of $\operatorname{PSL}(2,\R)\rtimes\Z/2$ determined by 
    \[
    U(r_I) = ZJ_I,
    \] acting covariantly on $\A$. Here $(\delta_I(t))_{t\in\R}$ is the one-parameter dilation subgroup of $\operatorname{PSL}(2,\R)$ with respect to $I$ and $r_I$ the reflection of the interval onto the complement $I'$.
    \item Graded Haag Duality. $\A(I')=Z\A(I)'Z^*$, for $I\in\mathcal{I}$.
    \item Outer regularity. $\A(I_0)=\bigcap_{I\in\mathcal{I}, I\supset \bar{I_0}} \A(I), I_0\in\mathcal{I}$.
    \item Additivity. If $I=\bigcup_{\alpha}I_\alpha$ with $I, I_\alpha \in \mathcal{I}$, then $\A(I)=\bigvee_\alpha\A(I_\alpha)$.
    \item Factoriality. $\A(I)$ is a type III$_1$-factor for $I\in\mathcal{I}$.
    \item Irreducibility. $\bigvee_{I\in\mathcal{I}}\A(I)=B(\hilb)$.
    \item Vacuum Spin-Statistics theorem. $e^{i2\pi L_0}=\Gamma$.
    \item Uniqueness of Covariance. For fixed $\Omega$, the strongly continuous projective representation $U$ of $\operatorname{Diff}(S^1)^{(\infty)}$ making the net covariant is unique. 
\end{enumerate}

\section{Representations of Graded Local Conformal Nets}

In this section, we assume that $\A$ is a graded local conformal net, although certain notions and results are valid in the \mob \ covariance case. 
\begin{definition}
    A Doplicher-Haag-Roberts (DHR) representation $\lambda$ of $\A$ is a map 
    $I \to \lambda_I$ that associates to an interval $I\in\mathcal{I}$ a normal representation $\lambda_I$ of $\A(I)$ on a fixed Hilbert space $\hilb_\lambda$ such that 
    \[
    \lambda_{\Tilde{I}}|_{\A(I)} = \lambda_I, \ I\subset\Tilde{I},
    \]
    for $\Tilde{I}\in\mathcal{I}$.
\end{definition}
We say that a representation $\lambda$ on $\hilb_\lambda$ is diffeomorphism covariant if there exists a projective unitary representation $U_\lambda$ of $\operatorname{Diff}(S^1)^{\infty}$ on $\hilb_\lambda$ such that
\[
\lambda_{\Dot{g}I}(U_gxU^*_g) = U_\lambda(g)\lambda_I(x)U^*_\lambda(g), \ x\in\A(I), \ g\in\operatorname{Diff}(S^1)^{\infty}.
\]
Here $\Dot{g}$ denotes the image of $g$ in $\operatorname{Diff}(S^1)$. A \mob \ covariant representation is analogously defined.\\
\begin{definition}
    A DHR representation $\lambda$ on $\hilb_\lambda$ is graded if there exists a self adjoint unitary operator $\Gamma_\lambda$ on $\hilb_\lambda$ such that 
\[
\lambda_I(\Gamma x\Gamma) = \Gamma_\lambda\lambda_I(x)\Gamma_\lambda, \ x\in\A(I),
\]
for $I\in\mathcal{I}$.
\end{definition}
\begin{proposition}[\cite{ckl}, Proposition 12]
    Let $\lambda$ be an irreducible DHR representation of $\A$. Then $\lambda$ is graded and diffeomorphism covariant.
\end{proposition}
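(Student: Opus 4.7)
The plan is to extract from $\lambda$ a projective representation $U_\lambda$ of $\operatorname{Diff}(S^1)^{(\infty)}$ in two stages (Möbius first, then full diffeomorphism), and then use $U_\lambda$ to produce the grading unitary $\Gamma_\lambda$. The ungraded case is due to Guido-Longo for Möbius covariance and to Weiner and D'Antoni-Fredenhagen-K\"oster for its extension to $\operatorname{Diff}(S^1)^{(\infty)}$; the task here is to verify that those arguments remain valid under the twist $Z$ entering graded locality $\A(I') = Z\A(I)'Z^*$.

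For Möbius covariance I would adapt Guido-Longo: fix $I \in \mathcal{I}$ and examine the modular operator and modular conjugation of $\lambda_I(\A(I))$ on a suitable GNS space. By Bisognano-Wichmann for $\A$, which yields $U(\delta_I(-2\pi t)) = \Delta_I^{it}$ and $U(r_I) = ZJ_I$, together with a DHR transport argument, these data implement the dilations and reflection for $I$ on $\hilb_\lambda$, with the twist $Z$ absorbed into the reflection step. Assembling these pieces across $I \in \mathcal{I}$ produces a strongly continuous projective unitary representation $U_\lambda$ of $\operatorname{PSL}(2,\R)^{(\infty)}$ satisfying the required intertwining property. For the extension to $\operatorname{Diff}(S^1)^{(\infty)}$, I would use that for $g \in \operatorname{Diff}(S^1)^{(\infty)}_I$ the projective unitary $U_g$ lies (up to the twist $Z$) in $\A(I)$, by the localization axiom combined with graded Haag duality, so $\lambda_I(U_g)$ furnishes a natural local lift to $\hilb_\lambda$; compatibility of these lifts across intersecting intervals and with the Möbius lift follows from irreducibility of $\lambda$ and from the fact that $\operatorname{Diff}(S^1)^{(\infty)}$ is generated by $\operatorname{PSL}(2,\R)^{(\infty)}$ together with the subgroups $\operatorname{Diff}(S^1)^{(\infty)}_I$ as $I$ varies.

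With $U_\lambda$ in hand, gradedness follows from the vacuum spin-statistics theorem $\Gamma = e^{i2\pi L_0}$. Define $\Gamma_\lambda := e^{i 2\pi L_0^\lambda}$, where $L_0^\lambda$ generates the rotation subgroup of $U_\lambda$. Since a full rotation in $\operatorname{Diff}(S^1)^{(\infty)}$ covers the identity in $\operatorname{Diff}(S^1)$, applying the covariance intertwining relation to this element delivers $\Gamma_\lambda \lambda_I(x) \Gamma_\lambda^* = \lambda_I(\Gamma x \Gamma)$ at once. The main obstacle I expect is showing that $\Gamma_\lambda$ is self-adjoint, equivalently $\Gamma_\lambda^2 = 1$: this requires the spectrum of $L_0^\lambda$ to lie in $\tfrac{1}{2}\Z + h_\lambda$ with $2h_\lambda \in \Z$, i.e., that $\lambda$ is of Neveu-Schwarz or Ramond type. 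I would obtain this from the conformal spin-statistics theorem applied inside $\lambda$, using that the statistics phase of an irreducible DHR representation is a root of unity; alternatively one verifies $U_\lambda(4\pi) = 1$ directly by pushing $\Gamma^2 = 1$ through the intertwining property and invoking irreducibility of $\lambda$ to rule out stray phases.
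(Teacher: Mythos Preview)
The paper does not supply a proof of this proposition: it is quoted verbatim from \cite{ckl}, Proposition~12, and no argument is reproduced here. There is therefore nothing in the present paper to compare your sketch against beyond the bare citation.

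That said, your outline is essentially the strategy of \cite{ckl}, so it is well aligned with the source. Two remarks on the details. First, your proposed route to $\Gamma_\lambda^2=1$ via ``the statistics phase is a root of unity'' does not by itself yield the conclusion (a root of unity need not square to $1$); the second route you mention is the correct one. Covariance together with $\Gamma^2=1$ shows that $U_\lambda$ of the $4\pi$-rotation commutes with every $\lambda_I(\A(I))$, hence is a scalar by irreducibility, and since the rotation subgroup $\R$ is simply connected one may renormalise the lift (shift $L_0^\lambda$ by a real constant) so that this scalar becomes $1$ without disturbing the covariance relation. Second, a small simplification: since $\Gamma=U(\mathrm{rot}_{2\pi})$ is central in the image of $U$, every $U_g$ commutes with $Z$, so for $g\in\operatorname{Diff}(S^1)^{(\infty)}_I$ graded Haag duality gives $U_g\in Z\A(I)Z^*=\A(I)$ outright, and no residual twist needs to be carried through when forming $\lambda_I(U_g)$.
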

A localized endomorphism $\lambda$ of $\A$ is an endomorphism of the universal $C^*$-algebra 
$C^*(\A)$ (\cite{be1}, Section $2.2$) such that $\lambda|_{\A(I')}$ is identity for some $I\in\mathcal{I}$. Then we say $\lambda$ is localized in the interval $I$.

\begin{definition}
    A localized endomorphism of $\lambda$ of $\A$ is called graded if it commutes with the Fermi grading $\mathfrak{F}$, namely,  $\Gamma\lambda(x)\Gamma = \lambda(\Gamma x \Gamma)$, $x\in C^*(\A)$.
\end{definition}

The following proposition generalizes to the graded local conformal net case the well known DHR argument for correspondence between DHR representations and localized endomorphisms. 

\begin{proposition}
    Let $\pi$ be a graded DHR representation of the graded local conformal net $\A$, and suppose
the Hilbert spaces $\hilb$ and $\hilb_\pi$ to be separable. Given an interval $I\in\mathcal{I}$, there exists a graded endomorphism $\rho$ of $\A$ localized in $I$ unitarily equivalent to $\pi$, $\pi=\operatorname{Ad}U\circ\rho$. 
\end{proposition}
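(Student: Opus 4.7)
The plan is to mimic the classical DHR argument (as already used in the ungraded setting) and adapt it to accommodate the grading. Fix an interval $I \in \mathcal{I}$. By factoriality $\A(I')$ is a type III$_1$ factor, and $\pi_{I'}$ is a normal representation of $\A(I')$ on the separable Hilbert space $\hilb_\pi$. Since any two faithful normal representations of a properly infinite factor on separable Hilbert spaces are unitarily equivalent, one can produce a unitary $V : \hilb_\pi \to \hilb$ with
\[
V\,\pi_{I'}(x)\,V^* = x, \quad x \in \A(I').
\]
Setting $\rho := \operatorname{Ad}V \circ \pi$, we have $\rho_{I'} = \operatorname{id}_{\A(I')}$, and the claim $\pi = \operatorname{Ad}V^* \circ \rho$ is immediate.

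\medskip

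Next I would show that $\rho$ passes to an endomorphism of the universal $C^*$-algebra $C^*(\A)$ localized in $I$. For $\tilde{I} \in \mathcal{I}$ with $\tilde{I} \supset I$, take $x \in \A(\tilde{I})$ and $y \in \A(\tilde{I}')$. Because $\tilde{I}' \subset I'$, we have $\rho(y) = y$, and so the appropriate graded commutation of $x$ with $y$ translates to a graded commutation of $\rho(x)$ with $y$ — exactly the condition expressed by graded Haag duality $\A(\tilde{I}) = Z\A(\tilde{I}')'Z^*$. A careful tracking through the twist $Z$ then places $\rho(x)$ in $\A(\tilde{I})$, so that $\rho$ restricts to a normal endomorphism of each $\A(\tilde{I})$, compatible with the inclusions, and therefore descends to an endomorphism of $C^*(\A)$ localized in $I$.

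\medskip

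The main obstacle is arranging that $\rho$ be graded, equivalently that $V$ can be chosen to intertwine $\Gamma_\pi$ and $\Gamma$. Because $\pi$ is graded, $\Gamma_\pi$ implements $\operatorname{Ad}\Gamma$ on $\pi_{I'}(\A(I'))$, and the same automorphism is implemented on $\A(I')$ by $\Gamma$ itself. Hence $W := V\Gamma_\pi V^* \Gamma^{-1}$ commutes with $\A(I')$ and by graded Haag duality lives in a subalgebra associated with $I$ (after conjugating by $Z$). I would then exploit the residual freedom of multiplying $V$ on the left by any unitary in $\A(I')' = Z\A(I)Z^*$ to absorb $W$: a $2$-cocycle-type computation (using $\Gamma^2 = 1$ and $\Gamma_\pi^2 = 1$) shows $W$ defines a self-adjoint unitary in the appropriate subalgebra, whose square root can then be used to correct $V$. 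After this correction one has $V\Gamma_\pi = \Gamma V$, and consequently
\[
\Gamma\rho(x)\Gamma = \Gamma V\pi(x)V^*\Gamma = V\Gamma_\pi \pi(x)\Gamma_\pi V^* = V\pi(\Gamma x\Gamma)V^* = \rho(\Gamma x\Gamma),
\]
so $\rho$ commutes with the Fermi grading $\mathfrak{F}$ and is the desired graded localized endomorphism unitarily equivalent to $\pi$.
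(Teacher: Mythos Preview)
Your strategy is the right one --- indeed it is essentially what the paper outsources to \cite{ckl}, Proposition~14, which already delivers a localized endomorphism $\rho$ together with a unitary $U$ satisfying $U^*\Gamma_\pi=\Gamma U^*$; the paper then only checks gradedness of $\rho$ by the one-line computation you wrote at the end. However, your explicit reconstruction has two genuine gaps.

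First, your Step~2 (showing $\rho(\A(\tilde I))\subset\A(\tilde I)$ via graded Haag duality) tacitly uses that $\rho$ preserves the grading, which is your Step~3. Concretely: from $[x,Z^*yZ]=0$ for $x\in\A(\tilde I)$, $y\in\A(\tilde I')$, applying $\rho$ only gives relations among $\rho(x_\pm)$ and $y_\pm$; to conclude $Z^*\rho(x)Z\in\A(\tilde I')'$ you need $[\rho(x_+),\Gamma]=0$ and $\{\rho(x_-),\Gamma\}=0$, i.e.\ exactly that $\rho$ commutes with $\mathfrak F$. The fix is simply to swap the order: establish the grading intertwining for $V$ first (this only uses that $\tilde\Gamma:=V\Gamma_\pi V^*$ and $\Gamma$ implement the same automorphism on $\A(I')$, hence $W=\tilde\Gamma\Gamma\in\A(I')'$), and only then run the Haag duality argument.

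Second, in Step~3 your claim that $W=V\Gamma_\pi V^*\Gamma^{-1}$ is a self-adjoint unitary is false in general: $W^*=\Gamma\tilde\Gamma$, which equals $W=\tilde\Gamma\Gamma$ only when $\Gamma$ and $\tilde\Gamma$ commute. What is true is the twisted relation $\Gamma W\Gamma=W^*$. This is enough to salvage the square-root idea: writing $W=e^{iH}$ with $\sigma(H)\subset(-\pi,\pi]$ one gets $\Gamma H\Gamma=-H$ (away from the endpoint), so $u:=e^{iH/2}\in\{W\}''\subset\A(I')'$ satisfies $u^{-1}\Gamma u=\tilde\Gamma$, hence $(uV)\Gamma_\pi=\Gamma(uV)$. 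A cleaner alternative, and the one implicit in \cite{ckl}, is to build $V$ block-diagonally with respect to the eigenspace decompositions of $\Gamma$ and $\Gamma_\pi$, using that the Bose subnet $\A_b(I')$ is a type~III factor acting on each block; then $V\Gamma_\pi=\Gamma V$ holds by construction, and one only needs to adjust a relative phase so that $V$ intertwines on the odd part of $\A(I')$ as well.
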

\begin{proof}
By Proposition $14$ in \cite{ckl}, there exists a unitary operator $U: \hilb \to \hilb_{\pi}$ and a localized endomorphism $\rho$ of $\A$ such that $\pi=\operatorname{Ad}U\circ\rho$ and $U^*\Gamma_{\pi}=\Gamma U^*$. Since $\pi$ is graded, we have
\[
U\Gamma\rho(x)\Gamma U^*=\Gamma_{\pi}U\rho(x)U^*\Gamma_{\pi}=\Gamma_{\pi}\pi(x)\Gamma_{\pi} = \pi(\Gamma x\Gamma) = U\rho(\Gamma x\Gamma) U^*.
\]
Therefore, $\rho$ is a  graded localized endomorphism of $\A$.
\end{proof}

Fix an interval $I\in\mathcal{I}$. Denote by $\operatorname{Rep}^{\text{Gr}}_I(\A)$ the category of graded endomorphisms of $\A$ localized in $I$, whose object set $\Delta_I^{\text{Gr}}(\A)$ consists of graded endomorphisms of $\A$ localized in $I$ and morphisms between objects are intertwiners, i.e.  $\operatorname{Hom}_{\A(I)}(\lambda,\mu)=\{x\in Z\A(I)Z^*: x\lambda(a)=\mu(a)x, a\in\A(I)\}$. Denote by 
    $\langle\lambda,\mu\rangle_{\A(I)}$ the dimension of $\operatorname{Hom}_{\A(I)}(\lambda,\mu)$.
\begin{definition}
    A conformal net $\{M(I)\}_{I\in\mathcal{I}}$ is completely rational \cite{klm} if its representation category $\operatorname{Rep}(M)$ is a modular tensor category. 
\end{definition}

\begin{theorem}
    Let $\A$ be a graded local conformal net whose bose subnet 
    $\A_b$ is completely rational. Then $\operatorname{Rep}_I^{\text{Gr}}(\A)$ is a braided tensor category. 
\end{theorem}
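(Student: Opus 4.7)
The plan is to follow the Doplicher--Haag--Roberts construction of the braided tensor structure on localized endomorphisms, adapted to the graded setting where graded locality replaces ordinary locality and intertwiners live in $Z\A(I)Z^*$ rather than $\A(I)$. The complete rationality of $\A_b$ enters as the ambient assumption ensuring that the representation category of the Bose subnet is modular, from which the graded case will inherit sufficient structure for the $\alpha$-induction developed in later sections.

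First, for the tensor product I would check that the composition of two graded localized endomorphisms $\lambda,\mu \in \Delta_I^{\text{Gr}}(\A)$ is again graded and localized in $I$: the condition $\Gamma\rho(\cdot)\Gamma = \rho(\Gamma\cdot\Gamma)$ is multiplicative under composition, and $\rho|_{\A(I')} = \id$ is visibly preserved. Since $Z = (1-i\Gamma)/(1-i)$ is a polynomial in $\Gamma$, a graded endomorphism commutes with $\operatorname{Ad}Z$, so $\lambda(Z\A(I)Z^*) \subseteq Z\A(I)Z^*$. Hence the usual formula $t \otimes s = t\lambda_1(s) = \lambda_2(s)t$ for the tensor product of intertwiners $t \in \operatorname{Hom}_{\A(I)}(\lambda_1,\lambda_2)$ and $s \in \operatorname{Hom}_{\A(I)}(\mu_1,\mu_2)$ lands in $Z\A(I)Z^*$, and associativity and functoriality are formal.

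Next, for the braiding I would construct statistics operators $\epsilon^\pm(\lambda,\mu)$ by DHR-type transport. Given $\lambda,\mu \in \Delta_I^{\text{Gr}}(\A)$ and an auxiliary interval $\tilde I$ disjoint from $I$, chosen on one of the two sides of $I$ in $S^1$, transportability (via \mob\ covariance, together with the correspondence between graded DHR representations and graded localized endomorphisms established in the preceding proposition) provides a graded endomorphism $\tilde\mu$ localized in $\tilde I$ together with a unitary $u \in \operatorname{Hom}(\mu,\tilde\mu)$ lying in $Z\A(K)Z^*$ for any $K \supset I \cup \tilde I$. Define $\epsilon^\pm(\lambda,\mu) = u^*\lambda(u)$. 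The key observation that makes the standard argument go through in the graded case is that a graded localized endomorphism $\lambda$ in $I$ acts trivially not only on $\A(I')$ but on $Z\A(I')Z^*$, because $\operatorname{Ad}Z$ commutes with $\lambda$; in particular $\lambda(u) = u$ whenever $u \in Z\A(J)Z^*$ for $J \subset I'$. From here, independence of choices, the intertwining property, and the hexagon identities are established by adapting the arguments of \cite{bek}, with systematic replacement of $\A(I)$ by $Z\A(I)Z^*$ and careful tracking of the $Z$-twist.

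The main obstacle is precisely this last step: the careful bookkeeping of the $Z$-twist throughout. One must verify that each candidate intertwiner lives in the correct $Z$-twisted algebra, that the intertwining relations hold correctly on all of $\A(I)$, and that the hexagon equations close up with the correct signs. A second, more structural concern is ensuring that the supply of graded transportable endomorphisms is rich enough to define the braiding on all of $\operatorname{Rep}_I^{\text{Gr}}(\A)$; this is where complete rationality of $\A_b$, combined with the DHR-type correspondence of the preceding proposition, guarantees that every graded localized endomorphism admits a unitarily equivalent one localized in any prescribed subinterval of $S^1$, so that $\epsilon^\pm$ is defined on all of the category.
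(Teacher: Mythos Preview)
Your approach is correct in outline but takes a genuinely different route from the paper. You attempt to redo the DHR statistics construction directly in the graded setting, tracking the $Z$-twist throughout and building $\epsilon^\pm(\lambda,\mu)=u^*\lambda(u)$ from charge transporters. The paper instead bypasses all of this by working through the Bose subnet: it observes that for the subfactor $\A_b(I)\subset\A(I)$, every $\lambda\in\Delta_I^{\text{Gr}}(\A)$ restricts to $\lambda_b\in\operatorname{Rep}(\A_b)$, and since $\lambda$ commutes with $\operatorname{Ad}\Gamma$ one has $\lambda=\alpha^+_{\lambda_b}=\alpha^-_{\lambda_b}$ (citing \cite{ckl}, Proposition~26). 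Thus every graded localized endomorphism is ambichiral, and the braiding is simply the relative braiding $\mathcal{E}_r^\pm$ of \cite{be3} with the isometries $s,t$ taken to be identities, yielding $\epsilon^\pm(\lambda,\mu)=\epsilon^\pm(\lambda_b,\mu_b)$. Complete rationality of $\A_b$ is used to ensure $\operatorname{Rep}(\A_b)$ is braided so that this relative braiding exists.

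The paper's route is much shorter and has a structural payoff you would not get for free: the identity $\epsilon^\pm(\lambda,\mu)=\epsilon^\pm(\lambda_b,\mu_b)$ is exactly what is exploited in the later sections on simple current extensions and $\alpha$-induction. Your direct construction, if carried through, would still require a separate argument to identify your $\epsilon^\pm$ with the Bose-subnet braiding. Conversely, your approach is more self-contained and does not rely on the $\alpha$-induction machinery of \cite{be3} and Proposition~26 of \cite{ckl}; but the bookkeeping you flag as ``the main obstacle'' (verifying charge transporters live in $Z\A(K)Z^*$, closing the hexagons with the $Z$-twist) is real work that the paper avoids entirely.
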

\begin{proof}
    Consider the subfactor $\A_b(I) \subset \A(I)$, where $\A_b$ is the Bose subnet of $\A$. Any element in $\Delta_I^{\text{Gr}}(\A)$ is ambichiral in the sense of \cite{bek2}, section $2$. 
    Indeed, for any $\lambda\in\Delta_I^{\text{Gr}}(\A)$, its restriction $\lambda_b \coloneqq \lambda|_{\A_b}$ gives a DHR representation of the Bose subnet. Since $\lambda$ is graded, it commutes with the Fermi grading, i.e., $\lambda = \text{Ad}\Gamma\circ\lambda\circ\text{Ad}\Gamma $. By proposition 26 in \cite{ckl}, we have $\lambda = \alpha^{+}_{\lambda_b}=\alpha^{-}_{\lambda_b}$. 
    The relative braiding operators are defined by taking intertwining isometries $s,t$ to be the identity operator $\mathcal{E}_r^{\pm}(\lambda,\mu) = \epsilon^{\pm}(\lambda_b,\mu_b)$, where $\lambda,\mu\in\Delta_I^{\text{Gr}}(\A)$ and $ \epsilon^{\pm}(\lambda_b,\mu_b)$ are unitary braiding operators giving the braiding structure of the DHR representation category of $\A_b$. 
    \end{proof}
 In the rest of this paper, we write $\mathcal{E}_r^{\pm}(\lambda,\mu)$ as $\epsilon^{\pm}(\lambda,\mu)$, if this does not cause any ambiguity.

\begin{remark}
    The braided tensor category structure on the representation theory of vertex operator (super)algebras is known and studied in section $2$ of \cite{ckm}. In our case, the structure morphisms of the $Q$-system are even morphisms satisfying identities in definition $2.25$ of \cite{ckm}. However, in this paper, we mainly focus on the graded locality and its influence on counting multiplicities of the dual canonical endomorphism hence we do not study the supercateogry structure in detail. 
\end{remark}

\begin{remark}
    In \cite{cgh}, to find a graded local extension $\A\subset\B $ for a given graded local conformal net $\A$, they first fix a $Q$-system $\Theta_b$ in $\operatorname{Rep}(\A_b)$. $\Theta_b$ determines the inclusion $\A_b\subset\B_b$.
    Then they search for another $Q$-system $\Theta_f$ which is a $\Theta_b$-module in $\operatorname{Rep}(\A_b)$ with twist $-1$. $\Theta_f$ corresponds to the extension $\B_b\subset\B$. The expected extension $\A\subset\B$ is given by the $Q$-system 
    $\Theta_b\oplus\Theta_f$, which is the graded conformal net analogous to the superalgebra object in the sense of definition $2.25$ of \cite{ckm}. In addition, as in our case, 
    we directly carry out $\alpha$-induction on $\A_b\subset\B$. Therefore, our approach and \cite{cgh} are two different points of view of the same $Q$-system. 
\end{remark}

\section{Simple Current Extensions of Graded Local Conformal Nets}
In this section, we study simple current extensions of graded local conformal nets. Let $(\A, \hilb_0, \Omega_0, \Gamma_0)$ be a graded local net. Assume $\sigma$ is an order $n$ automorphism in $\Delta_{I_0}^{\text{Gr}}(\A)$. Extend $\sigma$ such that $\sigma(\Gamma_0)=\Gamma_0$. Define a representation $\pi: \A \to B(\hilb)$ by:
\begin{equation}
    \pi(a)\xi \coloneqq \oplus_{p=0}^{n-1}\sigma^p(a)\xi_p \ \text{and} \ \pi(\Gamma_0)\xi\coloneqq\oplus_{p=0}^{n-1}\Gamma_0\xi_p,
\end{equation}
where $\hilb = \hilb_0^{\oplus n}$ and $\xi = \oplus_{p=0}^{n-1} \xi_p$. 
Denote $\B(I)\coloneqq \pi(\A(I))$, $\Omega_p\coloneqq \delta_{p,0}\Omega_0$, $\Gamma \coloneqq \oplus_{p=0}^{n-1}\Gamma_0$, $Z \coloneqq \frac{1-i\Gamma}{1-i}$. Then $\Gamma\pi(x)\Gamma = \pi(\Gamma_0x\Gamma_0)$. For any interval $I$, we can take a unitary operator $u_I$ such that $[u_I,\Gamma_0] = 0$ and 
$\text{Ad}(u_I)\circ\sigma$ is a graded endomorphism localized in $I$.
Define $f_{u_I}\in B(\hilb)$, by 
\begin{equation}\label{defequa}
    (f_{u_I}\xi)_p \coloneqq \sigma^{p-1}(u^*_I)\xi_{p-1}, \ p\in \Z_n.
\end{equation}
As a result,
\[
(f_{u_I}^*\xi)_p = \sigma^p(u_I)\xi_{p+1}.
\]
Then, we have $f_{u_I}^*\pi(a)f_{u_I} = \pi\circ\sigma_I(a)$ for any $a\in\A$. Note that 
$f_{u_I}\in\B(I')'$ since $\B(I)$ may not be Haag dual. Now we define the extended net $\mathcal{C}$ on $\hilb$ as
\[
\mathcal{C}(I) \coloneqq \langle\B(I), Z^*f_{u_I}Z\rangle'',
\]
the von Neumann algebra generated by $\B(I)$ and $Z^*f_{u_I}Z$.
Note that $f_{u_I} = f_{\id}\pi(u_I^*)$, where $f_{\id}$ is defined by replacing $u_I$ with the identity operator in (\ref{defequa}). The definition of the extended net is independent of the choice of $u_I$. Indeed, if $\text{Ad}(\hat{u_I})\circ\sigma \in \Delta_I^{\text{Gr}}(\A)$, for another $\hat{u_I}$, we have 
$u_I\hat{u_I}^*\in\A(I')' = Z_0\A(I)Z_0^*$.
Then $Z^*f_{u_I}Z$ and $Z^*f_{\hat{u_I}}Z$ only differ by an element of $\B(I)$ because $\pi(Z_0aZ_0^*) = Z\pi(A)Z^*$. In addition, 
we have:
\begin{equation}
    \Gamma f_{u_I}\Gamma = \Gamma f_{\id}\pi(u_I^*)\Gamma = f_{\id}\pi(\Gamma_0u_I^*\Gamma_0)= f_{u_I}.
\end{equation}
where we use the fact that $[\Gamma,f_{\id}]=[\Gamma_0,u_I]=0$. Therefore $[Z,f_{u_I}]=0$ and 
we can rewrite $\mathcal{C}(I) = \langle\B(I), f_{u_I}\rangle''$.

\begin{lemma}
\label{lemma:1}
    In the above setting, we have, 
    $f_{\id}\pi(u_I^*) = \pi(\sigma^{-1}(u_I^*))f_{\id}$.
\end{lemma}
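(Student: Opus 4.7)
The plan is to verify the identity by a direct component-wise computation on an arbitrary vector $\xi=\oplus_{p=0}^{n-1}\xi_p\in\hilb=\hilb_0^{\oplus n}$, using the three definitions at hand: the action of $\pi$, which multiplies the $p$-th component by $\sigma^p(\,\cdot\,)$, the formula (\ref{defequa}) specialized to $u_I=\id$, which gives $(f_{\id}\xi)_p=\xi_{p-1}$, and the cyclic convention that indices are taken in $\Z_n$.

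First I would compute the left-hand side. Applying $\pi(u_I^*)$ to $\xi$ produces a vector whose $q$-th component is $\sigma^q(u_I^*)\xi_q$. Then $f_{\id}$ shifts indices down by one, so the $p$-th component of $f_{\id}\pi(u_I^*)\xi$ equals $\sigma^{p-1}(u_I^*)\xi_{p-1}$. Next I would compute the right-hand side: $f_{\id}\xi$ has $q$-th component $\xi_{q-1}$, and then applying $\pi(\sigma^{-1}(u_I^*))$ multiplies the $p$-th component by $\sigma^p(\sigma^{-1}(u_I^*))=\sigma^{p-1}(u_I^*)$, yielding $\sigma^{p-1}(u_I^*)\xi_{p-1}$ as well. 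The two expressions agree on every component, so the operators are equal.

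There is essentially no genuine obstacle here; the lemma is a bookkeeping identity expressing the commutation relation between the shift operator $f_{\id}$ and the diagonal representation $\pi$. The only thing to be careful about is the index convention: the shift $f_{\id}$ moves the $(p-1)$-st slot into the $p$-th slot, which is why the automorphism $\sigma^{-1}$ rather than $\sigma$ appears on the right-hand side. Once the index bookkeeping is set up correctly, the identity is immediate.
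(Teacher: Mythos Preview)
Your proposal is correct and follows essentially the same approach as the paper: a direct component-wise verification on an arbitrary $\xi\in\hilb$, computing both sides to be $\sigma^{p-1}(u_I^*)\xi_{p-1}$ in the $p$-th slot. The paper's proof is the same calculation, just written in two lines.
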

\begin{proof}
    Take $\xi\in\hilb$. Then $(f_{\id}\pi(u_I^*))\xi)_p = (\pi(u_I^*)\xi)_{p-1} = \sigma^{p-1}(u_I^*)\xi_{p-1}$. On the other hand, 
    $(\pi(\sigma^{-1}(u_I^*))f_{\id}\xi)_p = \sigma^{p-1}(u_I^*)(f_{\id}\xi)_p=\sigma^{p-1}(u_I^*)\xi_{p-1}$.
\end{proof}
\begin{lemma}[\cite{be2} lemma $3.6$]
     In the above setting, we have, 
    $f_{u_{I_2}}f_{u_{I_1}} = \epsilon^{\pm}(\sigma,\sigma)f_{u_{I_1}}f_{u_{I_2}}$, if 
    $I_1\cap I_2 = \emptyset$.
\end{lemma}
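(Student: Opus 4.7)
The plan is to follow the strategy of \cite{be2}, Lemma~3.6, and verify that it extends without difficulty to the graded setting. First, using the factorization $f_{u_I} = f_{\id}\pi(u_I^*)$ together with the fact that $f_{\id}$ implements $\sigma^{-1}$ on $\pi(\A)$ (the identity $f_{\id}\pi(a)f_{\id}^{*} = \pi(\sigma^{-1}(a))$ follows by the same direct computation on $\hilb = \hilb_0^{\oplus n}$ that yields Lemma~1, extended from $a = u_I^{*}$ to arbitrary $a\in\A$), I would rewrite both products $f_{u_{I_j}}f_{u_{I_k}}$ in the form $f_{\id}^{2}\,\pi(X_{jk})$ for explicit elements $X_{jk}\in\A$ obtained by commuting the two $\pi(u_{I_\bullet}^{*})$ factors past one copy of $f_{\id}$. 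The sought commutation factor is then the $\pi$-image of an explicit word in the $u_{I_j}$ and their $\sigma$-iterates.

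In the second step, I would identify this factor with $\pi(\epsilon^\pm(\sigma,\sigma))$. The hypothesis $I_1\cap I_2=\emptyset$ is essential: choosing a reference interval $I_0$ disjoint from both $I_1$ and $I_2$ and a localization $\sigma\in\Delta_{I_0}^{\text{Gr}}(\A)$, each $u_{I_j}$ acts as a charge transporter from $\sigma$ to $\operatorname{Ad}(u_{I_j})\circ\sigma$, which is localized in $I_j$. The DHR construction of the self-braiding of $\sigma$ (carried out in the Bose subnet $\A_b$ and lifted to $\A$ via the theorem of Section~3) produces $\epsilon^\pm(\sigma,\sigma)$ as precisely this word, with the sign $\pm$ determined by whether $I_1$ lies clockwise or counterclockwise of $I_2$ on $S^1$. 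Matching then reduces to a direct comparison of formulas; the disjointness of $I_1, I_2$ is what allows the use of locality to shuffle the transporters into the canonical form of the DHR braiding.

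The main obstacle is the orientation bookkeeping: one must check that the specific cyclic-shift convention in the definition (\ref{defequa}) of $f_{u_I}$ produces $\epsilon^{+}$ versus $\epsilon^{-}$ in accordance with the relative position of $I_1$ and $I_2$. By contrast the graded aspect is essentially free: since $[u_I,\Gamma_0]=0$ by the choice of charge transporters, $\sigma(\Gamma_0)=\Gamma_0$ by assumption on $\sigma$, and $[\Gamma,f_{\id}]=0$ by construction, every element appearing in the computation commutes with $\Gamma$. Hence the commutation factor automatically lies in $\pi(\A_b)$, which is precisely where the DHR braiding discussed in Section~3 lives, and the ungraded argument of \cite{be2}, Lemma~3.6 applies verbatim.
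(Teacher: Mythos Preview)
Your proposal is correct and follows essentially the same route as the paper: factorize $f_{u_I}=f_{\id}\pi(u_I^{*})$, use Lemma~\ref{lemma:1} (extended to arbitrary $a$) to push the $\pi(u_{I_j}^{*})$ factors past $f_{\id}$, and then recognize the resulting word $\sigma(u_{I_2}^{*})u_{I_1}^{*}u_{I_2}\sigma(u_{I_1})$ as the DHR self-braiding $\epsilon^{\pm}(\sigma,\sigma)$. The paper's write-up is simply the explicit five-line computation of what you describe; your additional remarks on orientation bookkeeping and on why the graded structure imposes no new constraint are accurate but not spelled out there.
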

\begin{proof}
First notice that $f_{u_I} = f_{\id}\pi(u_I^*)$ for $I\in\mathcal{I}$.
    \begin{align*}
        f_{u_{I_2}}f_{u_{I_1}} &= f_{\id}\pi(u_{I_2}^*)f_{\id}\pi(u_{I_1}^*)\\
        &= \pi(\sigma^{-1}(u^*_{I_2})\sigma^{-2}(u^*_{I_1}))f^2_{\id} \\
        &= \pi(\sigma^{-1}(u^*_{I_2})\sigma^{-2}(u^*_{I_1})\sigma^{-2}(u_{I_2})\sigma^{-1}(u_{I_1}))f_{\id}\pi(u_{I_1})^*f_{\id}\pi(u_{I_2})^*\\
        &= \sigma^{-2}\circ\pi(\sigma(u^*_{I_2})u^*_{I_1}u_{I_2}\sigma(u_{I_1}))f_{u_{I_1}f_{u_{I_2}}}\\
        &= \sigma^{-2}(\epsilon^{\pm}(\sigma,\sigma))f_{u_{I_1}f_{u_{I_2}}}
    \end{align*}
    where we repeatedly use lemma \ref{lemma:1}.
\end{proof}

\begin{proposition}
    The extension net $\{\mathcal{C}(I)\}_{I\in\mathcal{I}}$ is graded local if and only if $\epsilon(\sigma,\sigma)^{\pm}=1$.
\end{proposition}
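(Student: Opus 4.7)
The plan is to exploit the $\Gamma$-evenness of $f_{u_I}$ (the identity $\Gamma f_{u_I}\Gamma = f_{u_I}$ established just before Lemma 1) to reduce graded locality of $\{\mathcal{C}(I)\}$ to the single commutation relation furnished by Lemma 2. Since $Z f_{u_I} Z^* = f_{u_I}$ for $\Gamma$-even $f_{u_I}$, and $\mathcal{C}(I) = \langle\B(I), f_{u_I}\rangle''$, the graded locality inclusion $\mathcal{C}(I_2)\subset Z\mathcal{C}(I_1)'Z^*$ for disjoint $I_1, I_2$ splits into three checks: (i) $\B(I_1)$ graded-commutes with $\B(I_2)$; (ii) $f_{u_{I_1}}$ commutes with $\B(I_2)$; and (iii) $f_{u_{I_1}}$ commutes with $f_{u_{I_2}}$.

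Condition (i) is inherited from the graded locality of $\A$: since $\pi=\bigoplus_{p=0}^{n-1}\sigma^p$ intertwines $\operatorname{Ad}\Gamma_0$ with $\operatorname{Ad}\Gamma$ and each $\sigma^p$ is a graded localized endomorphism of $\A$, graded locality transports from $\A$ to $\B=\pi(\A)$. Condition (ii) was already noted in the preamble to Lemma 1: by construction $f_{u_I}\in\B(I')'$, and $\Gamma$-evenness of $f_{u_I}$ upgrades ordinary commutation to graded commutation. The crux is (iii): Lemma 2 gives $f_{u_{I_2}}f_{u_{I_1}} = \epsilon^{\pm}(\sigma,\sigma)f_{u_{I_1}}f_{u_{I_2}}$, and since $\sigma$ is an automorphism the self-intertwiner $\epsilon^{\pm}(\sigma,\sigma)$ is a scalar in $\C$. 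Hence $[f_{u_{I_1}},f_{u_{I_2}}]=0$ iff $\epsilon^{\pm}(\sigma,\sigma)=1$. The converse is then immediate: if $\mathcal{C}$ is graded local, specializing the condition to the pair $(f_{u_{I_1}}, f_{u_{I_2}})$ and invoking Lemma 2 forces $\epsilon^{\pm}(\sigma,\sigma)=1$.

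The main obstacle I anticipate is verifying condition (i) carefully. Although $\pi$ is a representation of $\A$, it acts on the enlarged Hilbert space $\hilb=\hilb_0^{\oplus n}$, so computing commutants and applying the $Z$-twist requires bookkeeping through the direct-sum decomposition; in particular, one must check that the twisted inclusion for $\A$ on $\hilb_0$ propagates to a twisted inclusion for $\B$ on $\hilb$, using that $\sigma$ is localized inside $I_0$ and commutes with the original Fermi grading. A secondary subtlety is that the statement involves both signs $\pm$, so one must note that the argument from Lemma 2 yields the vanishing braiding condition for each sign independently (and that the two conditions are in fact equivalent when $\sigma$ is a simple current, since the monodromy $\epsilon^+(\sigma,\sigma)\epsilon^-(\sigma,\sigma)^*$ is then a scalar determined by the statistics).
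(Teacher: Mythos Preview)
Your proposal is correct and follows essentially the same route as the paper: reduce graded locality of $\mathcal{C}$ to the commutation $[f_{u_{I_1}},f_{u_{I_2}}]=0$ for disjoint $I_1,I_2$, and then invoke Lemma~2. The paper's proof is extremely terse---it simply asserts that ``it only suffices to check'' the $f$--$f$ relation and cites Lemma~2---whereas you spell out the three-way decomposition (i)--(iii) and the $\Gamma$-evenness argument that justifies this reduction, and you also make the converse direction explicit; these are details the paper leaves to the reader.
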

\begin{proof}
    Since $\A(I)$ is relatively graded local in $\B(I)$, it only suffices to check
    \begin{equation}
        f_{u_{I_2}}f_{u_{I_1}} = f_{u_{I_1}}f_{u_{I_2}},
    \end{equation}
    which is guaranteed by the above lemma.
\end{proof}
To sum up the preceding argument, we have the following result:
\begin{theorem}
    The simple current extension net $\{\mathcal{C}(I)\}_{I\in\mathcal{I}}$ is a graded local conformal net if and only if $\epsilon(\sigma,\sigma)^{\pm}=1$.
\end{theorem}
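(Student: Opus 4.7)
The forward implication is immediate from the preceding proposition, since a graded local conformal net must satisfy graded locality, which by the proposition forces $\epsilon^{\pm}(\sigma,\sigma)=1$. The content of the theorem is the converse: assuming $\epsilon^{\pm}(\sigma,\sigma)=1$, I need to show that $\mathcal{C}$ carries the full structure of a graded local conformal net.

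First I would verify isotony: if $I_1 \subset I_2$, then localization of $\operatorname{Ad}(u_{I_1})\circ\sigma$ in $I_1$ also yields localization in $I_2$, so one may take $u_{I_2}=u_{I_1}$ and consequently $f_{u_{I_2}}=f_{u_{I_1}}\in\mathcal{C}(I_1)\subset\mathcal{C}(I_2)$. Taking $\Omega:=\Omega_0\oplus 0\oplus\cdots\oplus 0$ and the grading $\Gamma=\oplus_{p=0}^{n-1}\Gamma_0$, one has $\Gamma\Omega=\Omega$ by construction. For cyclicity, observe that $f_{u_I}^{*k}\pi(\A(I))\Omega$ lies in the $k$-th summand of $\hilb_0^{\oplus n}$; applying the Reeh-Schlieder property of $\A$ in each summand gives density of $\bigvee_{I}\mathcal{C}(I)\Omega$ in $\hilb$. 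Uniqueness of the vacuum up to phase then follows from factoriality of $\A(I)$ combined with the irreducibility of the extension.

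The main work is the construction of the covariance representation $\tilde{U}:\operatorname{Diff}(S^1)^{(\infty)}\to B(\hilb)$. Since $\sigma$ is a graded DHR endomorphism, Proposition 3 of the excerpt guarantees it is diffeomorphism covariant, which yields a unitary cocycle $z(g)\in C^*(\A)$ satisfying $\operatorname{Ad}(U_g)\circ\sigma=\operatorname{Ad}(z(g))\circ\sigma\circ\operatorname{Ad}(U_g)$. On each summand of $\hilb_0^{\oplus n}$, I would define $\tilde{U}_g$ as $U_g$ twisted by the iterated cocycle $z_p(g):=z(g)\sigma(z(g))\cdots\sigma^{p-1}(z(g))$. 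The verification that $\tilde{U}_g$ simultaneously intertwines the action on $\pi(\A(I))$ and on $f_{u_I}$ is the key computation and relies on the cocycle identity together with the local nature of $u_I$. Positive energy of the generator $\tilde{L}_0$ then descends from positivity of $L_0$ on each summand.

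The hardest step will be handling the cocycle $z(g)$ carefully: one must ensure that $\tilde{U}$ is a genuine strongly continuous projective representation, that the vacuum is fixed by the M\"obius subgroup, and that the relations for $f_{u_I}$ are compatible across varying intervals. This is a graded adaptation of the simple current extension for local conformal nets carried out in \cite{be2}; the grading enters only through the identity $\Gamma f_{u_I}\Gamma = f_{u_I}$ established earlier in this section, which ensures that the generators of $\mathcal{C}(I)$ commute with $\Gamma$ as required for a graded structure to make sense on the extension.
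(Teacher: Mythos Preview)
Your proposal is substantially more detailed than what the paper actually offers. The paper provides no explicit proof of this theorem: it is stated immediately after the phrase ``To sum up the preceding argument, we have the following result,'' treating it as a direct consequence of the preceding proposition (graded locality of $\mathcal{C}$ holds iff $\epsilon^{\pm}(\sigma,\sigma)=1$) together with the standard simple current extension construction from \cite{be2}. The remaining conformal net axioms---isotony, covariance, positive energy, vacuum---are left entirely implicit.

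Your sketch correctly identifies and addresses this gap. The verification of isotony via the freedom in choosing $u_I$, cyclicity of the vacuum via Reeh--Schlieder on each summand, and the construction of the covariance representation through the iterated cocycle $z_p(g)=z(g)\sigma(z(g))\cdots\sigma^{p-1}(z(g))$ are the right ingredients, and your observation that the grading compatibility is already secured by $\Gamma f_{u_I}\Gamma=f_{u_I}$ is precisely the graded-specific point that distinguishes this from the local case in \cite{be2}. So your approach is not different from the paper's---it is an elaboration of what the paper elides. One minor caution: uniqueness of the vacuum requires slightly more care than ``factoriality plus irreducibility''; one typically argues via the spectral condition on $\tilde{L}_0$ and the fact that only the $p=0$ summand contains a lowest-weight vector for the extended representation, but this is routine.
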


\section{From Inclusion of Nets to Modular Invariants}
Let $\A \subset \B$ be an inclusion of graded local conformal nets on
$(\hilb, \Omega)$ with the same gauge unitary $\Gamma$. Let $\gamma: \B(I)\to\A(I)$ be the canonical endomorphism and $\theta=\gamma|_{\A(I)}$ be the dual canonical endomorphism. 
Then $\B(I) = \A(I)v$, where\\
$ v\in\operatorname{Hom}_{\B(I)}(\operatorname{id}, \gamma)$ and $w\in \operatorname{Hom}_{\A(I)}(\operatorname{id}, \theta)$ are isometries such that $w$ induces the conditional expectation 
$E(b) = w^*\gamma(b)w, b\in\B(I)$  and 
$w^*v = |\B : \A|^{\frac{1}{2}} = w^*\gamma(v)$.
Here, $|\B : \A|$ is the index of the subfactor $\A(I)\subset\B(I)$. The triple $(\theta, v, w)$ as above with some compatible conditions defines a $Q$-system.
In the following, we set $\epsilon(\lambda,\mu) = \epsilon^+(\lambda,\mu)$. Arguments for $\epsilon^+(\lambda,\mu)$ also work for $\epsilon^-(\lambda,\mu)$.
\begin{lemma}[\cite{be1} lemma $3.1$]\label{lembfe}
    $\theta\in\Delta_I^{\text{Gr}}(\A)$
    and for $\lambda\in\Delta_I^{\text{Gr}}(\A)$, we have 
    \[
    \operatorname{Ad}(\epsilon(\lambda,\theta))\circ\lambda\circ\gamma(v) = \theta(\epsilon(\lambda,\theta)^*)\gamma(v).
    \]
\end{lemma}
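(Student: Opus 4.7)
The plan is to split the lemma into two independent assertions: (i) $\theta\in\Delta_I^{\text{Gr}}(\A)$, a structural statement about the subfactor inclusion, and (ii) the intertwining identity, which I expect to follow formally from the naturality and composition properties (B) and (C) of the relative braiding already established in the preliminaries. I would treat these separately.

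For (i), I would exploit that $\A\subset\B$ share the common gauge unitary $\Gamma$, so the inclusion $\A(I)\subset\B(I)$ is a graded inclusion and the Jones conditional expectation $E:\B(I)\to\A(I)$ commutes with $\operatorname{Ad}\Gamma$. The canonical endomorphism $\gamma$ obtained from $E$ via the standard Longo--Rehren construction of \cite{lr} can then be chosen to commute with $\operatorname{Ad}\Gamma$, so that $\theta=\gamma|_{\A(I)}$ is graded. Localization of $\theta$ in $I$ is standard: after extending $\gamma$ to the universal $C^*$-algebra, one has $\gamma|_{\B(I')}=\operatorname{id}$ by graded Haag duality, whence $\theta|_{\A(I')}=\operatorname{id}$.

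For (ii), the key observation is that $\gamma(v)\in\operatorname{Hom}_{\A(I)}(\theta,\theta^2)$: from $vb=\gamma(b)v$ for $b\in\B(I)$, applying $\gamma$ and then restricting to $a\in\A(I)$ gives $\gamma(v)\theta(a)=\theta^2(a)\gamma(v)$. I would then apply the second naturality identity of (C) with $b=\gamma(v)$, $\nu=\theta$, $\nu'=\theta^2$, and $\omega=\lambda$ to obtain
\[
\gamma(v)\,\epsilon(\lambda,\theta) \;=\; \epsilon(\lambda,\theta^2)\,\lambda(\gamma(v)),
\]
expand $\epsilon(\lambda,\theta^2)=\theta(\epsilon(\lambda,\theta))\,\epsilon(\lambda,\theta)$ by composition rule (B), and rearrange by left-multiplying by $\theta(\epsilon(\lambda,\theta)^*)$ and right-multiplying by $\epsilon(\lambda,\theta)^*$, yielding
\[
\theta(\epsilon(\lambda,\theta)^*)\gamma(v) \;=\; \epsilon(\lambda,\theta)\,\lambda(\gamma(v))\,\epsilon(\lambda,\theta)^* \;=\; \operatorname{Ad}(\epsilon(\lambda,\theta))\circ\lambda\circ\gamma(v).
\]

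I expect part (i) to be the main obstacle: one must verify carefully that a representative of the canonical endomorphism can be chosen compatibly with the grading, which requires unpacking the Longo--Rehren construction in the graded setting (using $\Gamma\Omega=\Omega$ together with the Bisognano--Wichmann data, and tracking the role of $Z$). Once $\theta\in\Delta_I^{\text{Gr}}(\A)$ is secured, part (ii) is a purely categorical two-line manipulation; the distinction between the DHR braiding on $\A_b$ and the relative braiding $\mathcal{E}_r^\pm$ on $\A$ plays no active role beyond ensuring that (B) and (C) apply to $\lambda,\theta\in\Delta_I^{\text{Gr}}(\A)$, which is guaranteed by the ambichirality shown in the previous theorem.
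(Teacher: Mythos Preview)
Your argument for part (ii) is correct and essentially identical to the paper's: the paper records $\gamma(v)^*\in\operatorname{Hom}_{\A(I)}(\theta^2,\theta)$ and plugs it into the braiding fusion relation $(2)$ (the paper says ``$(1)$'', but the displayed line it uses is the second BFE), obtaining $\gamma(v)^*\theta(\epsilon(\lambda,\theta))\epsilon(\lambda,\theta)=\epsilon(\lambda,\theta)\lambda(\gamma(v)^*)$ and then rearranging. Your use of naturality (C) with $b=\gamma(v)$ followed by the composition rule (B) is the adjoint of the same computation.

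For part (i) your route differs from the paper's. The paper does not unpack the Longo--Rehren construction at all: it simply observes that $\theta$ is a (finite) direct sum of irreducible DHR representations of $\A$, and invokes Proposition~1 (every irreducible DHR representation of a graded local conformal net is automatically graded) to conclude $\theta\in\Delta_I^{\text{Gr}}(\A)$. So what you anticipate as the ``main obstacle'' is, in the paper's treatment, a one-sentence appeal to an already available structural fact; no Bisognano--Wichmann computation or tracking of $Z$ is needed. Your direct approach---showing that $\Gamma$ commutes with the modular conjugations $J_{\A(I)}$, $J_{\B(I)}$ because $\Gamma\Omega=\Omega$ and $\Gamma$ normalizes both algebras, hence $\operatorname{Ad}\Gamma\circ\gamma=\gamma\circ\operatorname{Ad}\Gamma$---is a perfectly valid alternative and has the advantage of yielding the gradedness of the \emph{specific} canonical endomorphism $\gamma$ (and hence $\theta$) on the nose, rather than passing through an irreducible decomposition and Proposition~2. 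The paper's route is shorter because the black box is already in hand; yours is more self-contained.
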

\begin{proof}
    $\theta$ is a direct sum of irreducible DHR representations of $\A$ and they are graded by proposition $1$.
    By proposition 3, all graded localized endomorphisms are ambichiral which implies the braiding fusion equation relations $(1)$ and $(2)$. By the intertwining property of $v$, 
    $\gamma(v)^*\in\text{Hom}_{\A(I)}(\theta\circ\theta,\theta)$. Applying this to the braiding fusion relation $(1)$, we have:
    \[
    \gamma(v)^*\theta(\epsilon(\lambda,\theta))\epsilon(\lambda,\theta) = \epsilon(\lambda,\theta)\lambda(\gamma(v)^*),
    \]
    hence,
    \[
    \gamma(v)^*\theta(\epsilon(\lambda,\theta)) = \text{Ad}(\epsilon(\lambda,\theta))\circ\lambda(\gamma(v)^*). 
    \]
\end{proof}

\begin{proposition}\label{commu}
    Take $\theta, \gamma, v$ as above. We have
    \[
    \epsilon(\theta, \theta)v^2 = \epsilon(\theta, \theta)^*v^2 = v^2.
    \]
    \[
     \epsilon(\theta, \theta)\gamma(v) =  \epsilon(\theta, \theta)^*\gamma(v) = \gamma(v).
    \]
\end{proposition}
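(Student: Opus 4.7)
The reduction is immediate: from $vb=\gamma(b)v$ with $b=v$ one has $v^{2}=\gamma(v)v$, so the identity $\epsilon(\theta,\theta)\gamma(v)=\gamma(v)$ yields $\epsilon(\theta,\theta)v^{2}=v^{2}$; and because $\epsilon(\theta,\theta)$ is unitary, each $\epsilon(\theta,\theta)^{*}$ version follows by applying $\epsilon(\theta,\theta)^{*}$ to both sides of the corresponding $\epsilon(\theta,\theta)$ version. The whole proposition therefore reduces to the single ``chiral locality'' identity
\[
\epsilon(\theta,\theta)\gamma(v)=\gamma(v).
\]
In the ungraded local case this is exactly \cite{be1} Proposition $3.2$, proved by commuting canonical isometries in disjoint intervals via locality of $\B$. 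The plan is to run the same argument, using graded locality of $\B$ in place of ordinary locality.

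The new ingredient needed is that the isometry $v$ is even with respect to $\Gamma$. Since $\A$ and $\B$ share the same grading unitary, $\gamma$ commutes with $\operatorname{Ad}\Gamma$, so $\Gamma v\Gamma$ is again an isometry in the one-dimensional space $\operatorname{Hom}(\id,\gamma)$, forcing $\Gamma v\Gamma=\pm v$. The same reasoning applied to $w$ gives $\Gamma w\Gamma=\pm w$, and the $Q$-system normalization $w^{*}v\in\R_{>0}$ rules out opposite parities (since $w^{*}v$ is a scalar, hence invariant under $\operatorname{Ad}\Gamma$), pinning both signs to $+$. Consequently $v$, $\gamma(v)$, $v^{2}$, and the braiding $\epsilon(\theta,\theta)=\epsilon^{+}(\theta_{b},\theta_{b})$ (which lives in the Bose subnet by the Theorem) are all even, so graded locality $\B(I')\subset Z\B(I)'Z^{*}$ collapses to ordinary commutation on these elements because $ZxZ^{*}=x$ whenever $x$ commutes with $\Gamma$.

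With this in place, pick an auxiliary interval $I_{0}\subset I'$ and a second canonical endomorphism $\gamma_{0}$ of $\A\subset\B$ localized in $I_{0}$, with corresponding even isometry $v_{0}\in\B(I_{0})$ in $\operatorname{Hom}(\id,\gamma_{0})$. Graded locality then gives the strict commutation $vv_{0}=v_{0}v$. Connecting $v_{0}$ to $v$ through a unitary charge transporter in $\A$ from $\gamma_{0}$ to $\gamma$ (which, for $I_{0}$ placed on the appropriate side of $I$, restricts on the Bose subnet to the positive braiding) and rewriting both $vv_{0}$ and $v_{0}v$ using the intertwining property recasts the commutation, exactly as in \cite{be1}, into $\epsilon^{+}(\theta_{b},\theta_{b})\gamma(v)=\gamma(v)$, which by the Theorem is $\epsilon(\theta,\theta)\gamma(v)=\gamma(v)$. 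The main obstacle is pure bookkeeping: tracking the charge transporter and the Bose-subnet restriction through the commutation identity. Both manipulations are formally identical to the local case of \cite{be1}, so once the evenness of $v$ is secured the remaining computation is routine.
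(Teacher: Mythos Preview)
Your strategy---transport the canonical isometry to a disjoint interval, obtain commutation, and extract the braiding identity---is the paper's. Two points deserve comment.

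The evenness detour is both unnecessary and not fully closed: the relation $w^{*}v\in\R_{>0}$ only forces $v$ and $w$ to share the \emph{same} parity, not parity $+1$, so your argument does not actually pin the sign. The paper avoids the issue entirely. With $u\in\A$ a charge transporter satisfying $u\gamma=\gamma_{+}u$ (for $\gamma_{+}$ localized in $I_{+}\subset I'$), the transported isometry $uv\in\operatorname{Hom}(\id,\gamma_{+})$ satisfies $(uv)b=\gamma_{+}(b)(uv)=b(uv)$ for every $b\in\B(I_{+}')$ purely by localization of $\gamma_{+}$; hence $uv\in\B(I_{+}')'$ from the intertwining relation alone, with no appeal to graded locality or to the parity of $v$. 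Since $v\in\B(I)\subset\B(I_{+}')$, the commutation $[uv,v]=0$ follows directly.

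The direction of the reduction is also reversed. The commutation $[uv,v]=0$ reads $uv^{2}=\theta(u)v^{2}$, giving $\epsilon(\theta,\theta)v^{2}=v^{2}$ first; the paper then uses $v^{2}=\gamma(v)v$ together with the conditional expectation (equivalently Lemma~\ref{inj}) to deduce $\epsilon(\theta,\theta)\gamma(v)=\gamma(v)$. Your reduction $\gamma(v)\Rightarrow v^{2}$ is of course valid, but since the commutation argument naturally lands on the $v^{2}$ identity, you would still need that cancellation step to reach $\gamma(v)$.
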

\begin{proof}
    Take $I_+$ another interval disjoint with $I$. Take $u\in\A$ which intertwines $\gamma$ and $\gamma_+$. Particularly, $uv$ intertwines  $\theta$ and $\theta_+$. Moreover,
    $uv$ intertwines $\text{id}_{\B}$ and $\theta_+$ which implies $uv\in\B(I'_+)'$ and $[uv,v]=0$.
    $uvv=vuv=\theta(u)v^2$ with $\epsilon(\theta,\theta)=u^*\theta(u)$, we get $\epsilon(\theta,\theta)v^2=u^*\theta(u)v^2=v^2$. By the fact that $v^2=\gamma(v)v$, we have $\epsilon(\theta,\theta)\gamma(v)vv^*=\gamma(v)vv^*$, taking conditional expectation on both sides, we get $\epsilon(\theta,\theta)\gamma(v)=\gamma(v)$.
\end{proof}

\begin{lemma}\label{ext}
    Let $t\in Z\B(I)Z^*$ such that $t\lambda(a) = \mu(a)t$ for any $a\in\A(I)$, then $t\in\text{Hom}_{\B(I)}(\alpha_\lambda, \alpha_\mu)$. 
\end{lemma}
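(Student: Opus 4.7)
The plan is to reduce the intertwining condition on all of $\B(I)$ to a single identity on the Q-system isometry $v$, and then verify that identity by combining the formula for $\alpha_\lambda(v)$ (derivable from Lemma~\ref{lembfe}) with naturality of the relative braiding and the Q-system relations.

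First, since $\B(I)$ is generated as a $*$-algebra by $\A(I)$ and $v$, and since $\alpha_\lambda|_{\A(I)}=\lambda$ and $\alpha_\mu|_{\A(I)}=\mu$, the hypothesis $t\lambda(a)=\mu(a)t$ for $a\in\A(I)$ already provides the intertwining on $\A(I)$. By multiplicativity of $\alpha_\lambda$ and $\alpha_\mu$, the lemma reduces to verifying the single identity $t\alpha_\lambda(v)=\alpha_\mu(v)t$.

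Second, I would derive the explicit formula $\alpha_\lambda(v)=\epsilon(\lambda,\theta)^*v$ by applying the canonical endomorphism $\gamma$ and using the defining relation $\gamma\circ\alpha_\lambda=\operatorname{Ad}(\epsilon(\lambda,\theta))\circ\lambda\circ\gamma$ of $\alpha$-induction together with Lemma~\ref{lembfe}: one obtains $\gamma(\alpha_\lambda(v))=\theta(\epsilon(\lambda,\theta)^*)\gamma(v)=\gamma(\epsilon(\lambda,\theta)^*v)$, and faithfulness of $\gamma$ on $\B(I)$ yields the claim; similarly $\alpha_\mu(v)=\epsilon(\mu,\theta)^*v$. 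The target identity then reads
\[
t\,\epsilon(\lambda,\theta)^*v=\epsilon(\mu,\theta)^*v\,t.
\]

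Third, using the bimodule property $vx=\gamma(x)v$ for $x\in\B(I)$, the right-hand side rewrites as $\epsilon(\mu,\theta)^*\gamma(t)v$, so the required identity becomes $\bigl(t\,\epsilon(\lambda,\theta)^*-\epsilon(\mu,\theta)^*\gamma(t)\bigr)v=0$. To verify this, I would apply the naturality property~(C) of the relative braiding: the hypothesis on $\A(I)$ implies $\gamma(t)\in\operatorname{Hom}(\theta\lambda,\theta\mu)$, and combining naturality for $\gamma(t)$ against $\theta$ with the composition rule~(B) (writing $\epsilon(\theta\lambda,\theta)=\theta(\epsilon(\lambda,\theta))\epsilon(\theta,\theta)$ and similarly for $\theta\mu$) and the Q-system identity $\epsilon(\theta,\theta)\gamma(v)=\gamma(v)$ from Proposition~\ref{commu} should produce the desired equality upon multiplication by $\gamma(v)$, which can then be transported back through $v$.

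The main obstacle lies precisely in this last step: naturality of the relative braiding is cleanly formulated for intertwiners in $Z\A(I)Z^*$, but here $t$ only lies in the larger algebra $Z\B(I)Z^*$, so one cannot apply naturality to $t$ itself. The workaround is to apply it instead to $\gamma(t)\in\A(I)$ and then use the Q-system relations of Proposition~\ref{commu} to convert the resulting identity into one that holds when multiplied by $v$. Keeping track of the $Z$-twists in the graded setting adds an additional technical layer that must be handled carefully throughout.
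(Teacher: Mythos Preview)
Your proposal is correct and follows essentially the same approach as the paper's proof. Both arguments reduce to the single identity $t\alpha_\lambda(v)=\alpha_\mu(v)t$, pass to $s=\gamma(t)\in\operatorname{Hom}_{\A(I)}(\theta\lambda,\theta\mu)$ to apply naturality/BFE inside $\A(I)$, and then use Proposition~\ref{commu} (the identity $\epsilon(\theta,\theta)\gamma(v)=\gamma(v)$) to absorb the extra $\epsilon(\theta,\theta)^{\pm1}$ factors that the composition rule produces; the only cosmetic difference is that you first extract the formula $\alpha_\lambda(v)=\epsilon(\lambda,\theta)^*v$ in $\B(I)$ before applying $\gamma$, whereas the paper works entirely under $\gamma$ from the start and inverts $\gamma$ at the end.
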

\begin{proof}
    Let $s=\gamma(t)$. Then clearly $s\in\operatorname{Hom}_{\A(I)}(\theta\circ\lambda,\theta\circ\mu)$. By proposition \ref{bfe}, we have 
    \[
    \epsilon(\theta\circ\mu,\theta)s = \theta(s)\epsilon(\theta,\theta)\theta(\epsilon(\lambda,\theta)).
    \]
    Since $\epsilon(\theta\circ\mu,\theta)=\epsilon(\theta,\theta)\theta(\epsilon(\lambda,\theta)$, we have \[
    s\theta(\epsilon(\lambda,\theta)^*) = \theta(\epsilon(\mu,\theta)^*)\epsilon(\theta,\theta)^*\theta(s)\epsilon(\theta,\theta).
    \]
    By repeatedly using proposition \ref{commu} and lemma \ref{lembfe} we have, 
    \begin{align*}
        s\operatorname{Ad}(\epsilon(\lambda,\theta))\circ\lambda\circ\gamma(v) &= s\theta(\epsilon(\lambda,\theta)^*)\gamma(v)\\
        &= \theta(\epsilon(\mu,\theta)^*)\epsilon(\theta,\theta)^*\theta(s)\epsilon(\theta,\theta)\gamma(v)\\
        &= \theta(\epsilon(\mu,\theta)^*)\epsilon(\theta,\theta)^*\theta(s)\gamma(v)\\
        &= \theta(\epsilon(\mu,\theta)^*)\epsilon(\theta,\theta)^*\gamma(v)s\\
        &= \theta(\epsilon(\mu,\theta)^*)\gamma(v)s\\
        &= \operatorname{Ad}(\epsilon(\mu,\theta))\circ\mu\circ\gamma(v)s.
    \end{align*}
    Taking $\gamma^{-1}$ on both sides of the above identity, we obtain $t\alpha_\lambda(v) = \alpha_\mu(v)t$.
\end{proof}
\begin{lemma}[\cite{be1} lemma $3.8$]\label{inj}
For $a\in\A(I)$, $av=0$ implies $a=0$. For $b\in\B(I)$, $w^*\gamma(b)=0$ implies $b=0$.    
\end{lemma}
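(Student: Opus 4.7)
The plan is to exploit two structural ingredients that are already built into the $Q$-system data: first, that $\gamma|_{\A(I)} = \theta$, so applying $\gamma$ converts any product $av$ with $a\in\A(I)$ into $\theta(a)\gamma(v)$; second, that $w\in\operatorname{Hom}_{\A(I)}(\operatorname{id},\theta)$ yields the intertwiner relation $w^*\theta(a) = aw^*$ for $a\in\A(I)$, which combines with the normalization $w^*v = w^*\gamma(v) = |\B:\A|^{1/2}$, a nonzero positive scalar, to give the nondegeneracy we need.

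For the first statement I would start from $av=0$ with $a\in\A(I)$, apply $\gamma$ to obtain $\theta(a)\gamma(v)=0$, left-multiply by $w^*$, and pull $w^*$ past $\theta(a)$ via the intertwiner property. This leaves $a\cdot w^*\gamma(v)=0$; since $w^*\gamma(v) = |\B:\A|^{1/2}$ is a nonzero scalar, $a=0$ follows at once.

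For the second statement I would invoke the presentation $\B(I)=\A(I)v$ to write $b = a'v$ for some $a'\in\A(I)$. Then $\gamma(b) = \theta(a')\gamma(v)$, and the same manipulation gives $w^*\gamma(b) = a'\cdot w^*\gamma(v) = |\B:\A|^{1/2}a'$, so the vanishing of $w^*\gamma(b)$ forces $a'=0$ and hence $b=0$. In effect this reduces the second part to the first.

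There is no substantive obstacle: the proof is entirely algebraic, packaged by the $Q$-system identities recorded just before the lemma. The only point to watch is that $w^*\gamma(v)$ must be treated as a \emph{central scalar} so that it commutes with $a$, and that its nonvanishing is what supplies the injectivity. The two statements are essentially dual to one another under the correspondence $\B(I) = \A(I)v$ and right multiplication by $v$.
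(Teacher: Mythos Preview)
The paper does not supply its own proof of this lemma; it merely cites \cite{be1}, Lemma~3.8. Your argument is correct and is essentially the standard one found there: use the intertwiner identity $w^*\theta(a)=aw^*$ together with the nonzero scalar $w^*\gamma(v)$ to conclude $a=0$, and for the second part invoke $\B(I)=\A(I)v$ to reduce to the first. One small remark: your caveat that $w^*\gamma(v)$ ``must be treated as a central scalar'' is unnecessary---by the stated $Q$-system relations $w^*\gamma(v)=|\B:\A|^{1/2}\cdot 1$ is literally a scalar multiple of the identity, so commutation with $a$ is automatic and not an additional hypothesis.
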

\begin{theorem}[\cite{be1}]
    For $\lambda, \mu \in \Delta_I^{\text{Gr}}(\A)$, $ \langle\alpha_\lambda, \alpha_\mu \rangle_{\B(I)} = \langle\theta\circ\lambda, \mu \rangle_{\A(I)} $.
\end{theorem}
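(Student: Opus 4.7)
The plan is to establish a bijection between the two intertwiner spaces by constructing explicit mutually inverse linear maps, following the standard Frobenius reciprocity argument for $\alpha$-induction in \cite{be1}. The crucial input from the graded setting will be Lemma \ref{ext}: once we produce an element of $\B(I)$ that intertwines $\lambda$ and $\mu$ as endomorphisms of $\A(I)$, Lemma \ref{ext} automatically upgrades it to an intertwiner between $\alpha_\lambda$ and $\alpha_\mu$ on all of $\B(I)$. Graded locality guarantees $Z\B(I)Z^{*}=\B(I)$ and $Z\A(I)Z^{*}=\A(I)$, so we may work throughout inside the unadorned algebras.

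The forward map is $\Phi(t):=w^{*}\gamma(t)$ for $t\in\operatorname{Hom}_{\B(I)}(\alpha_\lambda,\alpha_\mu)$. Because $\gamma$ sends $\B(I)$ into $\A(I)$, we have $\Phi(t)\in\A(I)$. To see that $\Phi(t)\in\operatorname{Hom}_{\A(I)}(\theta\circ\lambda,\mu)$, I will use $\alpha_\lambda|_{\A(I)}=\lambda$ to rewrite $t\alpha_\lambda(a)=\alpha_\mu(a)t$ as $t\lambda(a)=\mu(a)t$ for $a\in\A(I)$; applying $\gamma$ and then the intertwining property $w^{*}\theta(x)=xw^{*}$ of $w\in\operatorname{Hom}(\id,\theta)$ yields $\Phi(t)\theta\lambda(a)=\mu(a)\Phi(t)$.

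The backward map is $\Psi(r):=|\B:\A|^{-1/2}\,rv$ for $r\in\operatorname{Hom}_{\A(I)}(\theta\circ\lambda,\mu)$; since $r\in\A(I)\subset\B(I)$ and $v\in\B(I)$, we have $\Psi(r)\in\B(I)$. Using $v\lambda(a)=\gamma(\lambda(a))v=\theta\lambda(a)v$, which holds because $\lambda(a)\in\A(I)$ and $v\in\operatorname{Hom}_{\B(I)}(\id,\gamma)$, the intertwining property of $r$ gives $\Psi(r)\lambda(a)=\mu(a)\Psi(r)$ for every $a\in\A(I)$. Lemma \ref{ext} then places $\Psi(r)$ in $\operatorname{Hom}_{\B(I)}(\alpha_\lambda,\alpha_\mu)$, which is the only step in the argument that is not a formal $Q$-system manipulation and is therefore the main obstacle; fortunately it has already been dispatched.

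Finally, to check $\Phi\circ\Psi=\id$ and $\Psi\circ\Phi=\id$, I will expand using the multiplicativity of $\gamma$, the restriction $\gamma|_{\A(I)}=\theta$, the commutation relations $\gamma(t)v=vt$ and $w^{*}\theta(r)=rw^{*}$, and the $Q$-system normalizations $w^{*}v=w^{*}\gamma(v)=|\B:\A|^{1/2}$. These reduce the two identities to $\Phi(\Psi(r))=|\B:\A|^{-1/2}w^{*}\theta(r)\gamma(v)=r$ and $\Psi(\Phi(t))=|\B:\A|^{-1/2}w^{*}vt=t$, whence the two intertwiner spaces are canonically isomorphic and have the same dimension.
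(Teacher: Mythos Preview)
Your argument is correct and follows the paper's proof essentially verbatim: the same maps $t\mapsto w^{*}\gamma(t)$ and $r\mapsto rv$, with Lemma~\ref{ext} supplying the one nontrivial step. The only cosmetic difference is that the paper establishes equality of dimensions by showing each map is injective separately via Lemma~\ref{inj}, whereas you normalize by $|\B:\A|^{-1/2}$ and verify directly that the two maps are mutual inverses.
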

\begin{proof}
    Let $x\in\operatorname{Hom}_{\B(I)}(\alpha_\lambda,\alpha_\mu)$. Then, for $a\in\A(I)$,
    \[
    w^*\gamma(x)\theta\circ\lambda(a) = w^*\gamma(x\lambda(a)) = w^*\gamma(\mu(a)x)= 
    w^*\theta\circ\mu(a)x = \mu(a)w^*\gamma(x).
    \]
    Here, we use the intertwining property of $x$ and $w$. The above identity means $w^*\gamma(x)\in\operatorname{Hom}_{\A(I)}(\theta\circ\lambda,\mu)$ and the map $x\to w^*\gamma(x)$ is injective by lemma \ref{inj}.\\
    On the other hand, let $y\in\operatorname{Hom}_{\A(I)}(\theta\circ\lambda,\mu)$. Then, for $a\in\A(I)$,
    \[
    yv\lambda(a) = y \theta\circ\lambda(a)v = \mu(a)yv.
    \] Here, we use the intertwining property of $v$ and $y$. By lemma \ref{ext}, we have\\ 
    $yv\in\operatorname{Hom}_{\B(I)}(\alpha_\lambda,\alpha_\mu)$ and by lemma \ref{inj}, the map $y\to yv$ is injective. 
\end{proof}
\begin{corollary}

   In the above setting, define the modular invariant matrix $Z_{\lambda,\mu} = \langle\alpha^+_\lambda, \alpha^-_\mu\rangle_{\B(I)}$, where $\lambda,\mu\in\Delta_I^{\text{Gr}}(\A)$. Then, we have $Z_{0,\mu} = \langle\theta,\mu\rangle_{\A(I)}$ for any $\mu\in\Delta_I^{\text{Gr}}(\A)$, where 
   $0$ denotes the vacuum representation.
\end{corollary}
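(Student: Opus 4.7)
The plan is to reduce the corollary to the preceding theorem by specialising $\lambda$ to the vacuum representation and exploiting the fact that $\alpha^{\pm}_{0}=\id_{\B(I)}$.

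First I would observe that if $0$ denotes the vacuum representation, viewed as the identity endomorphism $\id\in\Delta_I^{\text{Gr}}(\A)$, then the defining formula for $\alpha$-induction
\[
\alpha^{\pm}_{\lambda}=\bar\iota^{-1}\circ\operatorname{Ad}(\epsilon^{\pm}(\lambda,\theta))\circ\lambda\circ\bar\iota
\]
immediately gives $\alpha^{\pm}_{0}=\bar\iota^{-1}\circ\bar\iota=\id_{\B(I)}$, since $\epsilon^{\pm}(\id,\theta)=1$ by property (A) applied at the level of $\Delta$. In particular $\alpha^{+}_{0}=\alpha^{-}_{0}$, which is the one crucial compatibility between the two inductions that is needed here.

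Next I would simply chain the identities: using $\alpha^{+}_{0}=\alpha^{-}_{0}=\id$,
\[
Z_{0,\mu}=\langle\alpha^{+}_{0},\alpha^{-}_{\mu}\rangle_{\B(I)}
=\langle\alpha^{-}_{0},\alpha^{-}_{\mu}\rangle_{\B(I)}.
\]
Now the preceding theorem, whose proof works verbatim for the $\alpha^{-}$-induction (the proof uses only the braiding fusion relations from (B) and (C), which hold equally for $\epsilon^{-}$), gives
\[
\langle\alpha^{-}_{0},\alpha^{-}_{\mu}\rangle_{\B(I)}
=\langle\theta\circ\id,\mu\rangle_{\A(I)}
=\langle\theta,\mu\rangle_{\A(I)}.
\]
Combining these two displayed equalities yields $Z_{0,\mu}=\langle\theta,\mu\rangle_{\A(I)}$.

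I do not anticipate any serious obstacle: the only subtlety is ensuring that the identification $\alpha^{+}_{0}=\alpha^{-}_{0}=\id$ is legitimate in the graded setting, but this is immediate from the definition since the vacuum sector trivially braids with everything. Once that is in place, the result is a one-line specialisation of the preceding theorem.
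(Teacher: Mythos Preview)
Your proposal is correct and is exactly the argument the paper has in mind: the corollary is stated without proof because it follows immediately from the preceding theorem by setting $\lambda=0$, using $\alpha^{+}_{0}=\alpha^{-}_{0}=\id_{\B(I)}$ to pass from the mixed pairing $\langle\alpha^{+}_{0},\alpha^{-}_{\mu}\rangle$ to $\langle\alpha^{-}_{0},\alpha^{-}_{\mu}\rangle$, and then invoking the theorem for the minus-induction (the paper explicitly notes just before Lemma~\ref{lembfe} that the arguments for $\epsilon^{+}$ work verbatim for $\epsilon^{-}$).
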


\section{$N=2$ Super-Virasoro Nets and Their Representations}
\begin{definition}
For any $t\in\R$, the $N=2$ super-Virasoro algebra $\operatorname{SVir}^{N=2,t}$ is the infinite dimensional Lie superalgebra generated by linearly independent even elements $L_n, J_n$ and odd elements $G_r^{\pm}$ where $n\in\Z$, $r, s\in\frac{1}{2}\mp t+\Z$, together with an even central element $c$ and with commutation relations:
    \begin{align*} 
    [L_m,L_n] &= (m-n)L_{m+n} + \frac{c}{12}(m^3-m)\delta_{m+n,0},\\
    [L_m,J_n] &= -nJ_{n+m},\\
    [J_m,J_n] &= \frac{c}{3}m\delta_{m+n,0},\\
    [L_n,G_r^{\pm}] &= (\frac{n}{2}-r)G^\pm_{n+r},\\
    [J_n,G_s^{\pm}] &= \pm\frac{1}{2}G^\pm_{n+s},\\
    [G_r^+,G_s^+] &= [G_r^-,G_s^-] =0,\\
    [G_r^+,G_s^-] &= 2L_{r+s}+(r-s)J_{r+s}+\frac{c}{3}(r^2-\frac{1}{4})\delta_{r+s,0}.
\end{align*}
\end{definition}
The Neveu-Schwarz(NS) $N=2$ super-Virasoro algebra is the super-Virasoro algebra with $t=0$, while the Ramond(R) super-Virasoro algebra is the one with $t=\frac{1}{2}$.\\
We are interested in irreducible unitary representations. Here, we mainly consider Neveu-Schwarz super-Virasoro algebra since it has the vacuum representation. In this case, the algebra of zero modes is abelian and the irreducibility implies that the lowest energy space is one dimensional and it is spanned by a single unit vector $\Omega_{c,h,q}$ such that $L_0\Omega_{c,h,q}=h\Omega_{c,h,q}$ and $J_0\Omega_{c,h,q}=q\Omega_{c,h,q}$. The real numbers $c, h, q$ completely determine the representation up to unitary equivalence.
\begin{theorem}[\cite{urep1}, \cite{urep2}, \cite{urep3}]
    For any irreducible unitary representation of the Neveu-Schwarz $N=2$ super-Virasoro algebra the corresponding values of $c, h, q$ satisfy one of the following conditions:
    \begin{enumerate}
        \item NS1 $c\geq3$ and $2h-2nq+(\frac{c}{3}-1)(n^2-\frac{1}{4})\geq0$ for all $n\in\frac{1}{2}+\Z$.
        \item  NS2 $c\geq3$ and $2h-2hq+(\frac{c}{3}-1)(n^2-\frac{1}{4})=0$,\\
        $2h-2(n+\operatorname{sgn}(n))q+(\frac{c}{3}-1)[(n+\operatorname{sgn}(n))^2-\frac{1}{4}]<0$ for some $n\in\Z+\frac{1}{2}$ and $2(\frac{c}{3}-1)h-q^2+\frac{c}{3}\geq0$.
        \item $c=\frac{3n}{n+2}, h=\frac{l(l+2)-m^2}{4(n+2)}, q=\frac{-m}{n+2}$, where 
        $n, l, m\in\Z$ satisfy $n\geq0, 0\leq l\leq n, l+m\in2\Z$ and $|m|\leq l$.
    \end{enumerate}
    Conditions NS1, NS2, NS3, are also sufficient, namely if values $c, h, q$ satisfy one of them then there exists a corresponding  irreducible unitary representation. In particular, all the values in the discrete series of representations (NS3) with $c=\frac{3n}{n+2}$ are realized by the coset construction for the inclusion $U(1)_{2n+4}\subset SU(2)_{n}\otimes \operatorname{CAR}^{\otimes2}$ for every nonnegative integer $n$. Here $\operatorname{CAR}^{\otimes2}$ denotes the theory generated by two real chiral free Fermi fields.
\end{theorem}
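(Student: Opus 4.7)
The strategy I would adopt is the classical Kac--Feigin--Fuchs approach to unitary highest weight modules, adapted to $\operatorname{SVir}^{N=2,0}$. Starting from the Verma module $M(c,h,q)$ generated from $\Omega_{c,h,q}$ by the action of the negative modes $L_{-n}, J_{-n}, G^\pm_{-r}$ for $n,r>0$, I would equip it with the unique contravariant Hermitian form $\langle\cdot,\cdot\rangle$ normalized by $\langle\Omega_{c,h,q},\Omega_{c,h,q}\rangle=1$, where contravariance is with respect to $L_n^*=L_{-n}$, $J_n^*=J_{-n}$, $(G_r^\pm)^*=G_{-r}^\mp$. The irreducible highest weight module $L(c,h,q)$ is unitary if and only if the induced form on $L(c,h,q)$ is positive semidefinite, so the classification reduces to locating the positivity region of this form in the three parameters $(c,h,q)$.

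The computational core is the $N=2$ Kac determinant: for each $N\in\tfrac12\Z_{\geq 0}$ I would diagonalize the Gram matrix of $\langle\cdot,\cdot\rangle$ on the finite-dimensional weight space at energy $h+N$, and factor the determinant as an explicit product of polynomials $\phi_{r,s}(c,h,q)$ whose zero set is the union of degeneration surfaces in the parameter space, with singular vectors constructible by a Benoit--Saint-Aubin type recursion. This is where the $N=2$ case departs most substantially from the Virasoro analysis of Friedan--Qiu--Shenker, because the $J$-charge introduces an additional variable and new polynomial factors; I expect this step to be the main obstacle.

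Necessity then proceeds in two regimes. For $c\geq 3$ I would argue by a continuity-plus-convexity argument: starting from explicit free field realizations of $\operatorname{SVir}^{N=2,0}$, one extends positivity to every connected component of the parameter space not separated from a known unitary triple by a sign change of $\det M_N$, arriving at precisely the inequalities of NS1 and NS2. For $c<3$ I would invoke the FQS-style non-unitarity lemma: between any two consecutive degeneration curves in $(h,q)$-space the form picks up a negative-norm vector, so unitarity persists only at the discrete intersection points of these curves. A direct combinatorial identification of these intersections with the triples $c=\tfrac{3n}{n+2}$, $h=\tfrac{l(l+2)-m^2}{4(n+2)}$, $q=\tfrac{-m}{n+2}$ subject to $0\leq l\leq n$, $|m|\leq l$, $l+m\in 2\Z$, gives NS3.

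For sufficiency I would realize every NS3 triple explicitly via the coset construction $U(1)_{2n+4}\subset SU(2)_n\otimes\operatorname{CAR}^{\otimes 2}$ indicated in the statement. Concretely, I would write the stress-energy tensor, the $U(1)$-current, and the odd generators $G^\pm_r$ of $\operatorname{SVir}^{N=2,0}$ as explicit normal-ordered polynomials in the $SU(2)_n$-currents and the two free Fermi fields, verify that the resulting central charge is $c=\tfrac{3n}{n+2}$, and check commutation with the embedded $U(1)_{2n+4}$. Decomposing the vacuum module of the ambient theory into $U(1)_{2n+4}$-isotypic components and applying $SU(2)_n$ branching rules then exhibits each discrete-series triple as a highest weight vector of a genuine unitary subrepresentation of the coset. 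For the $c\geq 3$ cases, positivity at boundary points of NS1 and NS2 can be confirmed by Fock-type constructions, completing the classification.
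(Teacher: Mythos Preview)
The paper does not supply a proof of this theorem: it is quoted as a known result and attributed to \cite{urep1}, \cite{urep2}, \cite{urep3}, with no argument given in the text beyond the statement itself. There is therefore nothing in the paper against which to compare your proposal line by line.

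That said, your outline is a faithful summary of the strategy carried out in those references. The Kac determinant for the $N=2$ Neveu--Schwarz algebra, the FQS-style exclusion argument for $c<3$, the continuity argument for $c\geq 3$, and the coset realization $U(1)_{2n+4}\subset SU(2)_n\otimes\operatorname{CAR}^{\otimes 2}$ for sufficiency in the discrete series are exactly the ingredients used in \cite{urep1}, \cite{urep2}, \cite{urep3} (and in the later operator-algebraic treatment in \cite{chklx}). Your identification of the Kac determinant factorization as the main technical hurdle is accurate: the extra $U(1)$ charge $q$ makes the combinatorics of singular vectors and the geometry of the degeneration surfaces genuinely more intricate than in the Virasoro or $N=1$ cases, and a complete write-up would require carrying this through carefully rather than asserting it. As a blueprint, though, your proposal is sound and matches the literature the paper cites.
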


In fact, associating to the vacuum representation with central charge $c$ of the Neveu-Schwarz $N=2$ super-Virasoro algebra for every allowed $c$, we can define a graded local conformal net, which leads to the definition of $N=2$ superconformal net. This is defined in \cite{chklx} definition $3.5$ by smearing certain fields operators.
\begin{definition}
    The $N=2$ super-Virasoro net with central charge $c$ is defined as follows:
    \[
    \A_c(I) = \{e^{iJ(f)},e^{iL(f)},e^{iG^k(f)}: f\in C_c^{\infty}(I,\R), k=1,2 \}'', I\in\mathcal{I}.
    \]
    Here, 
    \[
G^1_r \coloneqq \frac{G^+_r+G^-_r}{\sqrt{2}}, \ G^2_r \coloneqq -i \frac{G^+_r-G^-_r}{\sqrt{2}}, \ r\in \frac{1}{2}+\Z. 
\]
\end{definition}

\begin{definition}
    An $N=2$ superconformal net is a graded local conformal net $\A$ with central charge $c$ containing the $N=2$ super-Virasoro net $\A_c$ as a covariant subnet and such that the corresponding representations of $\operatorname{Diff}(S^1)^{(\infty)}$ agree. 
\end{definition}
For any allowed central charge $c = \frac{3n}{n+2}$ in the discrete series,\\
$\A_c(I) = \A_{U(1)_{2n+4}}(S^1)^{'}\bigcap(\A_{SU(2)_n}(I)\otimes\A_{\text{CAR}^{\otimes 2}}(I))$, by theorem $5.10$ in \cite{chklx}.
DHR representations are labeled by $(l,m)$ satisfying $l=0,1,2, ..., n$, $m=0,1,2,...,2n+3\in\Z/(2n+4)\Z$, with $l-m\in 2\Z$ with the identification $(l,m)=(n-l,m+n+2)$.\\
Fusion rules:
\[
(l_1,m_1)(l_2,m_2) = \bigoplus_{|l_1-l_2|\leq l \leq \text{min}\{l_1+l_2,2n-l_1-l_2\},
l+l_1+l_2\in 2\Z}(l,m_1+m_2).
\]
The restriction of $(l,m)$ to the Bose subnet is given by $(l,m,0)\oplus(l,m,2)$,  \\
$(l,m) = \alpha_{(l,m,0)}$. Statistics phases and dimensions:
\[\omega_{(l,m)} = \exp{(\frac{l(l+2)-m^2}{4(n+2)}2\pi i)}, 
\ d_{(l,m)} = \frac{\sin\frac{(l+1)\pi}{n+2}}{\sin\frac{\pi}{n+2}}.
\]

\section{Classification of $N=2$ Superconformal Nets}

According to Gannon's list \cite{gannonlist}, extensions of $N=2$ super-Virasoro nets split into two parts: simple current extensions of some subgroups of the maximal cyclic group and exceptional ones. Firstly, we discuss the set consisting of DHR representations with dimension $1$ and statistics $1$. Note that this set may not be a group in general. 

\begin{enumerate}
    \item Assume $n$ is odd. Then $G=\Z_{2(k+2)}$, with the generator $\sigma = (n,1)$, and relations $(n,1)^{2N}=(0,2N),\ (n,1)^{2N+1} = (n,2N+1)$, $N\in\mathbb{N}$.
    \item Assume $n$ is even. With relations $(n,0)^2 = (0,0),
    (n,0)(0,2N) = (n,2N)$, we have
    $G \cong \Z_{n+2}\times\Z_2$, through the group homomorphism: \\$(n,2N)\mapsto(0,2N)\times(n,0), (0,2N)\mapsto(0,2N)\times(0,0)$. The generator for $\Z_{n+2}$ is $(0,2)$ and the generator for $\Z_2$ is $(n,0)$.
\end{enumerate}
Therefore, if we want to take a simple current extension for some subgroup $H$, this $H$ must be a subgroup of one of the above groups. Furthermore, we have to make sure the generating representations have statistics phase $1$.
\begin{enumerate}
    \item[(A)] When $k$ is odd, take the smallest positive integer $p$ such that $\omega_{(k,1)^p} = 1$. The maximal cyclic group is $G = \langle(k,1)^p\rangle$, which is the cyclic group generated by $(k,1)^p$.
    \item[(B)] When $k\equiv 0\pmod{4}$, $\omega_{(k,0)} =1$. Let $p$ be the smallest positive integer such that $\omega_{(0,2)^p}=1$. The maximal cyclic group is $G= \langle(0,2)^p\rangle\times\langle(k,0)\rangle$.
    \item[(C)] When $k\equiv2\pmod{4}$, $\omega_{(k,0)}=-1$, which implies that $\omega_{(0,\frac{M}{2})}\in\{-1,i,-i\}$.
    Let $M$ be the smallest positive even integer such that $\omega_{(0,M)}=1$. We have $\omega_{(0,\frac{M}{2})} = -1$, $\omega_{(k,\frac{M}{2})}=1$. Indeed, by assumption, $\frac{M^2}{8(k+2)}$ and $\frac{k}{2}$ are odd then
    \begin{equation}
        \omega_{(k,\frac{M}{2})} = \exp(\frac{k}{2}\pi i-\frac{M^2}{8(k+2)}\pi i) = 1.
    \end{equation}
    
    Therefore, the maximal cyclic group $G$ is 
    \[
    \{(0,0),(k,\frac{M}{2}), (0,M), \cdots\} = \langle(k,\frac{M}{2})\rangle \cong H \times \Z_2.
    \]
    If $\omega_{(0,\frac{M}{2})} = \pm i$, we have $\omega_{(k,\frac{M}{2})} = \pm i$. Consequently, the maximal cyclic group is 
    $H = \langle (0,M)\rangle$. 
\end{enumerate}
Finally, for exceptional cases, we have: 
\begin{enumerate}
    \item[(a)] $(E_6)$ for $k=10$, $\theta = (0,0)\oplus(6,0)$. This is a dual canonical endomorphism because it arises from a conformal embedding $SU(2)_{10}\subset SO(5)_1$ as in \cite{kl}. The $Q$-system structure on $\theta$ is unique up to unitary equivalence.
    \item[(b)] $(E_6)$ for $k=10$, $\theta=(0,0)\oplus(0,12)$, This is a dual canonical endomorphism because it arises from a conformal embedding $U(1)_{12}\subset U(1)_3$. The DHR representation $(0,12)$ has dimension $1$ and statistics phase $1$, so it is realized as a crossed product by $\Z/2\Z$ and hence the $Q$-system structure is unique up to unitary equivalence. 
    \item[(c)] $(E_6)$ for $k=10$, $\theta = (0,0)\oplus(6,0)\oplus(0,12)\oplus(6,12)$. This is the combination of the above two extensions. Namely, we first consider the extension in (a) and make a crossed product extension by $\Z/2\Z$ as in (b). For the above two reasons, the $Q$-system structure of $\theta$ is unique up to unitary equivalence.  
    \item[(d)] $(E_8)$ For $k=28$, $\theta = (0,0)\oplus(10,0)\oplus(18,0)\oplus(28,0)$. This is a dual canonical endomorphism because it arises from a conformal embedding  $SU(2)_{28}\subset (G_2)_1$. As in (a), the $Q$-system structure of $\theta$ is unique up to unitary equivalence. 
\end{enumerate}

\begin{theorem}
    
    The complete list of $N=2$ superconformal nets with $c<3$ in the discrete series is listed as follows:
    \begin{enumerate}
        \item A simple current extension arising from a subgroup of the maximal cyclic group appearing in the above (A), (B) and (C). 
        \item The exceptional cases related to $E_6$ and $E_8$ as in the above (a), (b), (c) and (d).
    \end{enumerate}
\end{theorem}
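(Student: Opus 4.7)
The plan is to set up a one-to-one correspondence between $N=2$ superconformal nets $\A\supset\A_c$ in the discrete series and the modular invariants on Gannon's list, then read off the classification. Given such an $\A$, the subfactor $\A_c(I)\subset\A(I)$ sits inside the framework of Section 4 (with the braided tensor category $\operatorname{Rep}_I^{\text{Gr}}(\A_c)$ provided by Theorem 1), so $\alpha$-induction is available. Corollary 1 then produces a modular invariant matrix $Z_{\lambda,\mu}=\langle\alpha^+_\lambda,\alpha^-_\mu\rangle_{\A(I)}$ whose first row recovers the dual canonical endomorphism via $Z_{0,\mu}=\langle\theta,\mu\rangle_{\A_c(I)}$. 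By Gannon's classification \cite{gannonlist} of modular invariants for the $N=2$ super-Virasoro fusion ring, $Z$ must fall into one of the simple current families arising from subgroups of the cyclic groups described in (A), (B), (C), or into one of the four exceptional invariants (a)--(d).

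Conversely, from a modular invariant $Z$ on Gannon's list, the strategy is to produce a graded local extension realizing $\theta=\bigoplus_\lambda Z_{0,\lambda}\lambda$ as dual canonical endomorphism, which by Subsection 2.2 then determines the subfactor $\A_c(I)\subset\A(I)$ and hence the net $\A$. For the simple current cases (A), (B), (C), I would invoke the construction of Section 4: each candidate subgroup $H$ is generated by a graded localized automorphism $\sigma$ of dimension $1$ whose statistics phase must be checked to equal $1$ using the explicit formula $\omega_{(l,m)}=\exp(\frac{l(l+2)-m^2}{4(n+2)}2\pi i)$ from Section 6, after which Theorem 2 gives the desired graded local simple current extension. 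For the exceptional cases (a), (b), (d), each $\theta$ arises from a well-known conformal embedding ($SU(2)_{10}\subset SO(5)_1$, $U(1)_{12}\subset U(1)_3$, $SU(2)_{28}\subset (G_2)_1$), while (c) is realized as the iterated extension of (a) followed by (b).

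Finally, to close the bijection, I must verify that the assignment $\A\mapsto Z$ and its inverse are mutually inverse. One direction is essentially built into Subsection 2.2: given $\A$, extracting $Z$, reconstructing $\theta$, and building a subfactor recovers $\A(I)$ because the dual canonical endomorphism together with its $Q$-system structure determines the extension uniquely up to unitary equivalence. The subtle direction is that two nonisomorphic $Q$-systems could in principle yield the same $\theta$, so for each case I would argue uniqueness of the $Q$-system structure on $\theta$: for the simple current cases this follows because the extension is a cyclic crossed product, whose $Q$-system is determined up to the trivial cohomology class; for the exceptional cases (a) and (d) this is already recorded in \cite{kl}; for (b) it again comes from a $\Z/2\Z$ crossed product; and (c) is the composition of (a) and (b).

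The main obstacle I expect is the case analysis behind the simple current extensions, since the cocycle condition $\epsilon^\pm(\sigma,\sigma)=1$ required by Theorem 2 must be checked generator by generator in (A)--(C), and the $k\equiv 2\pmod 4$ subcase in (C) splits further depending on whether $\omega_{(0,M/2)}\in\{-1,+i,-i\}$, with the maximal cyclic group jumping between $\Z_{n+2}\times\Z_2$ and a proper cyclic subgroup. A secondary difficulty is matching the candidate subgroups $H$ produced by Gannon's list with those for which the statistics phase vanishes on all generators, which requires combining the parity analysis of (A)--(C) with the fusion identity $(l_1,m_1)(l_2,m_2)=\bigoplus (l,m_1+m_2)$ recorded in Section 6.
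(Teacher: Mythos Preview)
Your proposal is correct and follows essentially the same approach as the paper: pass from $\A\supset\A_c$ to a modular invariant via $\alpha$-induction (using the graded braided structure of Theorem~1 and Corollary~1), invoke Gannon's list, reconstruct the extension from $\theta=\bigoplus_\lambda Z_{0,\lambda}\lambda$ via Theorem~2 in the simple current cases and via the listed conformal embeddings in the exceptional cases, and close the bijection by uniqueness of the $Q$-system. The paper in fact carries out exactly this plan, with the case analysis (A)--(C) and (a)--(d) preceding the theorem serving as the body of the proof; your anticipated obstacles in the $k\equiv 2\pmod 4$ subcase and in matching subgroups to the statistics-phase condition are precisely the computations the paper performs there.
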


\section*{Acknowledgment}
The author would thank Yasuyuki Kawahigashi for bringing about this question and for his constant support and advice. The author would thank Sebastiano Carpi and Roberto Longo for reading the draft and for their suggestions. The author also thank the anonymous referee for the advice on the manuscript.

\begin{bibdiv} 
    \begin{biblist}
    \bib{be1}{article}{
    author={B\"{o}ckenhauer, J.}, author={Evans, D.E.},
    title={Modular Invariants, Graphs and $\alpha$-Induction for Nets of Subfactors I},
    journal={Communications in Mathematical Physics},
     date={1998},
     volume={197},
     pages={361--386}
    }
    \bib{be2}{article}{
    author={B\"{o}ckenhauer, J.}, author={Evans, D.E.},
    title={Modular Invariants, Graphs and $\alpha$-Induction for Nets of Subfactors II},
    journal={Communications in Mathematical Physics},
     date={1999},
     volume={200},
     pages={57-103}
    }
     \bib{be3}{article}{
    author={B\"{o}ckenhauer, J.}, author={Evans, D.E.},
    title={Modular Invariants, Graphs and $\alpha$-Induction for Nets of Subfactors III},
    journal={Communications in Mathematical Physics},
     date={1999},
     volume={205},
     pages={183--228}
    }
    \bib{bek}{article}{
    author={B\"{o}ckenhauer, J.},
    author={Evans, D.E.},
    author={Kawahigashi, Y.},
    title={On $\alpha$-induction, chiral generators and modular invariants for subfactors},
    journal={Communications in Mathematical Physics},
    date={1999},
    volume={208},
    pages={429--487}
    }
    \bib{bek2}{article}{
    author={B\"{o}ckenhauer, J.},
    author={Evans, D.E.},
    author={Kawahigashi, Y.},
    title={Chiral Structure of Modular Invariants for Subfactors},
    journal={Communications in Mathematical Physics},
    date={2000},
    volume={210},
    pages={733--784}
    }
    \bib{urep1}{article}{
    author={Boucher, W.},
    author={Friedan, D.},
    author={Kent, A.},
    title={Determinant formulae and unitarity for the $N=2$ superconformal algebras in two dimensions or exact results on string compactification},
    journal={Physics Letter B},
    date={1986},
    volume={172},
    pages={316--322}
    }
    \bib{cgh}{article}{
    author={Carpi, S.},
    author={Gaudio, T.},
    author={Giorgetti, L.},
    author={Hillier, R.},
    title={Haploid Algebras in $C^*$-Tensor Categories
and the Schellekens List},
    journal={Communications in Mathematical Physics},
    date={2023},
    volume={402},
    pages={169--212}
    }
    \bib{ckl}{article}{
  author={Carpi, S.}, author={Kawahigashi, Y.}, author={Longo, R.},
  title={Structure and Classification of Superconformal Nets},
  journal={Annales Henri Poincar\'{e}},
  date={2008},
  volume={9},
  pages={1069--1121}
    }
    \bib{ckm}{article}{
    author={Creutzig, T.}, author={Kanade, S.}, author={McRae, R.},
    title={Tensor categories for vertex operator superalgebra extensions},
    journal={Memoirs of the American Mathematical Society},
    date={2024},
    volume={295}
    }
    \bib{cw}{article}{
    author={Carpi, S.}, author={Weiner, M.},
    title={On the uniqueness of diffeomorphism symmetry in conformal field theory},
    journal={Communications in Mathematical Physics},
    date={2005},
    volume={258},
    pages={203--221}
    }
        \bib{chklx}{article}{
  author={Carpi, S.},author={Hillier, R.}, author={Kawahigashi, Y.}, author={Longo, R.}, author={Xu, F.},
  title={$N=2$ superconformal nets},
  journal={Communications in Mathematical Physics},
  date={2015},
  volume={336},
  pages={1285--1328}
   }
   \bib{urep2}{article}{
   author={Di Vecchia, P.},
   author={Peterson, J.L.},
   author={Yu, M.J.},
   author={Zheng, H.},
   title={Explicit consruction of unitary representations of the $N=2$ superconformal alegbra},
   journal={Physics Letter B},
   date={1986},
   volume={174},
   pages={280--284}
   }
   \bib{fg}{article}{
   author={Fr\"{o}hlich, J.}, author={Gabbiani, K.},
   title={Operator algebras and conformal field theory},
   journal={Communications in Mathematical Physics},
   date={1993},
   volume={155},
   pages={569--640}
   }
   \bib{gannonlist}{article}{
   author={Gannon, T.},
   title={$U(1)^m$ modular invariants, $N=2$ minimal models, and the quantum Hall effect},
   journal={Nuclear Physics B},
   date={1997},
   volume={491},
   pages={659--688}
   }
   \bib{gl}{article}{
   author={Guido, D.}, author={Longo, R.},
   title={The conformal spin and statistics theorem},
   journal={Communications in Mathematical Physics},
   date={1996},
   volume={181},
   pages={11--35}
   }
   \bib{urep3}{article}{
   author={Iohara, K.},
   title={Unitarizable highest weight modules of the $N=2$ super-Virasoro algebras: Untwisted sectors},
   journal={Letters in Mathematical Physics},
   date={2010},
   volume={91},
   pages={289--305}
   }
  \bib{jones}{article}{
  author={Jones, V.F.R.},
  title={Index for Subfactors},
  journal={Inventiones mathematicae},
  date={1983},
  volume={72},
  pages={1--25}
  }
  \bib{kl}{article}{
   author={Kawahigashi, Y.}, 
   author={Longo, R.},
   title={Classification of local conformal nets. Case $c < 1$},
   journal={Annals of Mathematics},
   date={2004},
   volume={160},
   pages={493--522}
   }
   \bib{kl2}{article}{
   author={Kawahigashi, Y.},
   author={Longo, R.},
   title={Classification of two-dimensional local conformal nets with $c<1$ and $2$-cohomology vanishing for tensor categories},
   journal={Communications in Mathematical Physics},
   date={2004},
   volume={244},
   pages={63--97}
   }
   \bib{klm}{article}{
   author={Kawahigashi, Y.},
   author={Longo, R.},
   author={M\"{u}ger, M.},
   title={Multi-Interval Subfactors and Modularity
of Representations in Conformal Field Theory},
   journal={Communications in Mathematical Physics},
   date={2001},
   volume={219},
   pages={631--669}
   }
   \bib{longo}{article}{
   author={Longo, R.},
   title={A duality for Hopf algebras and for subfactors},
   journal={Communications in Mathematical Physics},
   date={1994},
   volume={159},
   pages={133--150}
   }
   \bib{kosaki}{article}
   {
   author={Kosaki, H.},
   title={Extension of Jones theory on index to arbitrary factors},
   journal={Journal of Functional Analysis},
   date={1986},
   volume={66},
   pages={123--140}
   }
   \bib{lr}{article}{
   author={Longo, R.},
   author={Rehren, K.-H.},
   title={Nets of Subfactors},
   journal={Reviews in Mathematical Physics},
   date={1995},
   volume={7},
   pages={567--597}
   }
   \bib{wass}{article}{
   author={Wassermann, A.},
   title={Operator algebras and conformal field theory III. Fusion of positive energy representations of $LSU(N)$ using bounded operators},
   journal={Inventiones mathematicae},
   date={1998},
   volume={133},
   pages={467--538}
   }
  \bib{xu}{article}{
  author={Xu, F.},
  title={New Braided Endomorphisms from Conformal Inclusions},
  journal={Communications in Mathematical Physics},
  date={1998},
  volume={192},
  pages={349--403}
  }
    \end{biblist}
\end{bibdiv}

\end{document}